\DeclareMathOperator{\tw}{tw} 
\DeclareMathOperator{\Des}{Des} 
\DeclareMathOperator{\maj}{maj} 
\DeclareMathOperator{\occ}{occ} 
\DeclareMathOperator{\Av}{Av}
\DeclareMathOperator{\Grid}{Grid}
\newcommand{\TOTO}{\textsf{TOTO}\xspace}
\newcommand{\TOOB}{\textsf{TOOB}\xspace}
\newcommand{\TOLO}{\textsf{TOLO}}
\newcommand{\TO}{\textsf{TO}}
\newcommand{\TOW}{\textsf{TOW}}
\newcommand{\TOG}{\textsf{TOG}}
\newcommand{\TOLG}{\textsf{TOLG}}
\newcommand{\bfx}{\mathbf{x}}
\newcommand{\bfa}{\mathbf{a}}
\newcommand{\bfb}{\mathbf{b}}
\NewDocumentCommand{\definealphabet}{mmmm}
{
  \int_step_inline:nnn { `#3 } { `#4 }
  {
    \cs_new_protected:cpx { #1 \char_generate:nn { ##1 }{ 11 } }
    {
      \exp_not:N #2 { \char_generate:nn { ##1 } { 11 } }
    }
  }
}
\NewDocumentCommand{\tagx}{om}{%
  \IfNoValueTF{#1}
   {
    \refstepcounter{equation}(\theequation)\label{#2}%
   }
   {
    (#1)\def\@currentlabel{#1}\label{#2}%
   }%
}
\newenvironment{proofclaim}{\noindent{\em Proof of the claim.}}{\qedclaim}
\newcommand{\qedclaim}{\hfill $\diamond$ \medskip}
\DeclareMathOperator{\qd}{qd} 
\mathchardef\mhyphen="2D
\def\dd{\kern.4ex\mbox{\raise.4ex\hbox{{\rule{.35em}{.12ex}}}}\kern.4ex}
\newcommand{\ra}{\triangleright}
\newcommand{\ta}{\vartriangle}
\newcommand{\la}{\triangleleft}
\newcommand{\da}{\triangledown}
\theoremstyle{plain}
\newtheorem{theorem}{Theorem}[section]
\newtheorem{observation}[theorem]{Observation}
\newtheorem{claim}[theorem]{Claim}
\newtheorem{proposition}[theorem]{Proposition}
\theoremstyle{definition}
\newtheorem{example}[theorem]{Example}
\begin{document}

\title{Monadic Second-Order Logic of Permutations}

\setcounter{Maxaffil}{0}
\renewcommand\Affilfont{\itshape\small}
\author[1]{Vít Jelínek\thanks{Supported by project 23-04949X of the Czech Science Foundation.}}
\author[2]{Michal Opler}
\affil[1]
{Computer Science Institute, Charles University, Czechia}
\affil[2]
{Czech Technical University in Prague, Czechia}
%
\date{}
\maketitle              

\begin{abstract}
Permutations can be viewed as pairs of linear orders, or more formally as models over a signature consisting of two binary relation symbols.
This approach was adopted by Albert, Bouvel and Féray, who studied the expressibility of first-order logic in this setting.
We focus our attention on monadic second-order logic.

Our results go in two directions.
First, we investigate the expressive power of monadic second-order logic.
We exhibit natural properties of permutations that can be expressed in monadic second-order logic but not in first-order logic.
Additionally, we show that the property of having a fixed point is inexpressible even in monadic second-order logic.

Secondly, we focus on the complexity of monadic second-order model checking.
We show that there is an algorithm  deciding if a permutation $\pi$ satisfies a given monadic second-order sentence $\varphi$ in time $f(|\varphi|, \tw(\pi)) \cdot n$ for some computable function $f$ where $n = |\pi|$ and $\tw(\pi)$ is the tree-width of $\pi$. 
On the other hand, we prove that the problem remains hard even when we restrict the 
permutation $\pi$ to a fixed hereditary class $\mathcal{C}$ with mild assumptions on $\mathcal{C}$.
\end{abstract}

\section{Introduction}
\label{sec:introduction}

A classical result of Courcelle~\cite{Courcelle} states that any graph property expressible in 
monadic second-order logic (MSO) can be tested efficiently on graphs of bounded tree-width. This 
theorem has inspired further results of a similar form; thus, Courcelle, Makowsky and 
Rotics~\cite{Courcelle2000} showed a similar result for graphs of bounded clique-width using a 
weaker form of MSO, while first-order properties are known to be tractable on monotone 
nowhere-dense classes~\cite{gks}, as well as classes of bounded twin-width~\cite{Bonnet2020}.

As a partial converse to Courcelle's theorem, Kreutzer and Tazari~\cite{kt} have shown that under 
plausible complexity assumptions, MSO-definable properties cannot be efficiently tested on any 
monotone graph class whose tree-width is not bounded from above by a polylogarithmic function. A key 
component in the proof of this result, as well as of another similar result by Ganian et 
al.~\cite{ganian}, is a structural characterization of graphs of large tree-with, ultimately derived 
from the grid theorem of Robertson and Seymour~\cite{rs} stating that a class of graphs has 
unbounded tree-width if and only if its graphs contain arbitrarily large grids as minors. 

In this paper, we focus on hereditary classes of permutations. As is common in model-theoretic 
contexts~\cite{bofo,cam_per,Albert2020}, we 
represent a permutation as a relational structure equipped with a pair of binary relations, each of 
them representing a linear order. Ahal and Rabinovich~\cite{Ahal2008} have described a natural way 
to define tree-width for permutations. Their main motivation was to address the complexity of the 
decision problem known as Permutation Pattern Matching (PPM), whose goal is to determine whether a 
permutation $\tau$ (the \emph{text}) contains another permutation $\pi$ (the \emph{pattern}) as 
substructure. While PPM was shown to be NP-complete by Bose et al.~\cite{Bose1998}, the 
results of Ahal and Rabinovich imply that PPM is polynomial when the pattern is restricted to a 
fixed class of bounded tree-width. 

Although the complexity of restricted instances of PPM has recently attracted considerable 
attention~\cite{Albert2016,GV09_321,BerendsohnMs,Berendsohn2021,Jelinek2017,Jelinek2020,Jelinek2021,Jelinek2021b}, we still do not know 
whether PPM can be tractable when patterns are restricted to a hereditary permutation class of 
unbounded tree-width. One difficulty stems from the fact that, unlike in the case of graph classes, 
we do not have a suitable structural characterization of permutation classes of unbounded tree-width 
analogous to the grid theorem for graphs. A promising approach towards such a characterisation is 
based on the concept of \emph{long path property} (or LPP) of a permutation class. It is known that 
a permutation class that has the LPP must have unbounded tree-width~\cite{Jelinek2021b}; however, it 
is not known whether the converse holds as well. Nevertheless, LPP has played (sometimes implicitly) 
a key role in several hardness results on 
PPM~\cite{BerendsohnMs,Jelinek2017,Jelinek2020,Jelinek2021b}.

Recently, Albert et al.~\cite{Albert2020} studied the expressive power of 
permutation properties definable in first order logic. For instance, they considered 
two types of FO logic, one based on the above-mentioned representation of permutations by two linear 
orders (which they call \TOTO{} - the theory of two orders), and another based on a representation by a directed
graph formed by disjoint cycles (\TOOB{} - the theory of one bijection). They show that the two theories are
incomparable in terms of the expressive power of their FO formulas. In particular, they show that the
property of having a fixed point is FO-definable in \TOOB, but not in \TOTO. In this paper, we deal exclusively 
with the more usual formalism of \TOTO.

Two notable algorithmic applications of logic on permutations have appeared in recent years.
Bonnet et al.~\cite{Bonnet2020} have shown that FO model checking (for \TOTO) is tractable inside any proper permutation class.
In a recent preprint, Braunfeld~\cite{Braunfeld2023} utilized monadic second order logic of permutations to show that the basis and generating function of any geometric grid class of permutations are algorithmically computable.

\paragraph{Our results.}
In this paper, we focus on the expressive power and algorithmic tractability of MSO-definable 
properties of permutations.

In Section~\ref{sec:expr}, we explore the expressive power of MSO logic on permutations and show that MSO can express modular counting properties of permutation statistics, such as divisibility of the number of inversions or major index.
Moreover, we obtain general examples of MSO-definable properties that are not FO-definable, as well as an example of a natural property (to have a fixed point) which is not MSO-definable.

In Section~\ref{sec:model-checking}, we focus on the algorithmic tractability.
We show that MSO model checking is in FPT on any permutation class of bounded tree-width. In 
contrast, we show that on any permutation class with a suitable effective version of the long path property, MSO model 
checking is at least as hard as MSO model checking on general graphs.
As a corollary, we obtain a dichotomy for classes avoiding a single pattern -- there is an FPT algorithm for MSO model checking on permutations avoiding any of the patterns $1$, $12$, $132$ or their symmetries; otherwise model checking remains as hard as on general graphs.

\section{Preliminaries}
\label{sec:prelim}

A \emph{permutation $\pi$ of length $n$} is a sequence $\pi_1, \dots, \pi_n$ that contains each element of the set $[n] = \{1,\dots,n\}$ exactly once.
Note that we omit the punctuation when writing out short permutations explicitly, e.g., we write $3142$ instead of $3,1,4,2$.
It is often beneficial to view permutations as geometric objects -- we associate to each permutation $\pi$ the \emph{permutation diagram} $S_\pi = \{ (i, \pi_i) \mid i \in [n]\}$.
Observe that no two points from $S_\pi$ share the same $x$- or $y$-coordinate.
We say that such a set is in \emph{general position}.

For a point $p \in \mathbb{R}^2$ in the plane, we let $p.x$ denote its (first) $x$-coordinate and, $p.y$ its (second) $y$-coordinate.
We say that two finite sets $S,T \subset \mathbb{R}^2$ in general position are \emph{isomorphic} if there exists a bijection $f\colon S \to T$ such that for any pair of points $p,q \in S$ we have $p.x < q.x$ if and only if $f(p).x < f(q).x$, and $p.y < q.y$ if and only if $f(p).y < f(q).y$.

A permutation $\tau$ \emph{contains} a permutation $\pi$ if the diagram $S_\tau$ contains a 
subset isomorphic to~$S_\pi$.
Otherwise, we say that $\tau$ \emph{avoids} $\pi$.
A \emph{permutation class} is a hereditary set $\cC$ of permutations, i.e., whenever $\pi \in \cC$ and $\pi$ contains $\sigma$ then also $\sigma \in \cC$.
The easiest way of obtaining a permutation class is to take all the permutations avoiding a fixed permutation~$\sigma$.
We let $\Av(\sigma)$ denote the class of all $\sigma$-avoiding permutations.
In particular, $\Av(21)$ is the class of increasing permutations and $\Av(12)$ is the class of decreasing permutations.

\paragraph{Operations acting on permutations.}
Let $\pi$ be a permutation of length~$n$.
The \emph{reverse of $\pi$} is the permutation $\pi^r = \pi_n, \pi_{n-1}, \dots, \pi_1$ and the \emph{complement of $\pi$} is the permutation $\pi^c = n+1 - \pi_1, n+1-\pi_2, \dots, n+1-\pi_n$.
In other words, the reverse is obtained by mirroring $\pi$ horizontally while complement corresponds to mirroring $\pi$ vertically. 

Let $\sigma$ be a permutation of length $n$ and let $\tau_1, \dots, \tau_n$ be a sequence of permutations.
The \emph{inflation of $\sigma$ by $\tau_1, \dots,\tau_n$}, denoted $\sigma[\tau_1, \dots, \tau_n]$, is the permutation isomorphic to the point set obtained by replacing each point $(i, \sigma_i) \in S_\sigma$ with a tiny copy of $S_{\tau_i}$.
A permutation is \emph{simple}, if it cannot be obtained from strictly smaller permutations by an inflation.
For instance, the permutation 25314 is simple, while 25341 is not, since it can be obtained, e.g., as $231[1,312,1]$.

As a specific case of inflation, the \emph{(direct) sum} of two permutations $\sigma$ and~$\tau$, denoted
$\sigma\oplus\tau$, is the permutation $12[\sigma, \tau]$ while the \emph{skew sum} of $\sigma$ and $\tau$,
denoted $\sigma\ominus\tau$,
is the permutation $21[\sigma, \tau]$.
Finally, the \emph{separable permutations} are the permutations that can be created from the singleton permutation of size~1 by direct sums and skew sums; it is known~\cite{Bose1998} that these are precisely the permutations avoiding the patterns $2413, 3142$.

\paragraph*{Tree-width.}
Let us first introduce the standard definition of tree-width as a graph parameter.
A \emph{tree decomposition} of a graph $G$ is a pair $(T,\beta)$, where $\beta\colon  V(T) \to 2^{V(G)}$ assigns a \emph{bag} $\beta(p)$ to each vertex of T, such that
\begin{itemize}
  \item for every vertex $v$ of $G$, there exists $p \in V(T)$ such that $v \in
  \beta(p)$,
  \item for every edge $uv$ in $G$, there exists $p \in V(T)$ such that $u, v \in
  \beta(p)$, and
  \item for every vertex $v$ of $G$, the set $\lbrace p \in V(T): v \in \beta(p)
  \rbrace$ induces a connected subtree of $T$.
\end{itemize}
The \emph{width} of the tree decomposition is the maximum of $\beta(p) - 1$ over all $p \in V(T)$.
The \emph{tree-width} $\tw(G)$ of a graph $G$ is the minimum width of a tree decomposition of $G$.

The tree-width of a permutation $\pi$ is then defined by taking a tree-width of a certain graph encoding the structure of $\pi$.
The \emph{incidence-graph} $G_\pi$ of a permutation $\pi$ is the graph whose vertices 
are the $n$ points of $S_\pi$, and a point $p$ is connected to every point $q$ such that $|p.x - 
q.x| = 1$ or $|p.y - q.y| = 1$.
Less formally, the graph $G_\pi$ is a union of two paths, one of them visiting the points of $\pi$ in left-to-right order, and the other in top-to-bottom order.
The \emph{tree-width} of $\pi$, denoted by $\tw(\pi)$, is simply the tree-width of~$G_\pi$.

\paragraph*{FO and MSO logic.}
A \emph{signature} is a set of relation and function symbols, each associated with a non-negative integer, called \emph{arity}.
We restrict our attention to signatures consisting purely of relation symbols.
We are particularly interested in the signature $\cS_\TO$ that consists of two binary relation symbols $<_1$ and $<_2$.
These symbols intend to convey the ordering of points along the $x$- and $y$-axes.
A \emph{structure} of a signature $\cS$ is a pair $\cM = (A, I)$ where $A$ is an arbitrary set, 
called \emph{domain}, and $I$ describes an interpretation of the symbols in $\cS$ on $A$.
To be more precise, the interpretation $I(R)$ of a relation symbol $R$ with arity $k$ is a subset 
of~$A^k$.
For succinctness, we shall also denote the structures of $\cS_\TO$ simply as triples $\cM = (A, \prec^A_1, \prec^A_2)$ where $\prec^A_1$ and $\prec^A_2$ are interpretations of the symbols $<_1$ and $<_2$.
Observe that any permutation $\sigma$ can be naturally seen as a structure $(S_\sigma, \prec_1, \prec_2)$ of $\cS_\TO$ where $\prec_1$ and $\prec_2$ are the natural orders given by the $x$- and $y$-coordinates of points.

An \emph{atomic formula} over a signature $\cS$ is either an equality predicate ($x = y$), or a predicate $R(a_1, \ldots, a_k)$ for an arbitrary relation symbol $R$ of $\cS$ with arity $k$.
\emph{First-order (FO) formulas}, usually denoted by Greek letters, are formed inductively from atomic formulas and logical symbols.
In particular, a first-order  formula is either (i) an atomic formula, (ii) a negation of an FO formula ($\neg \varphi$), (iii) a conjuction ($\varphi_1 \land \varphi_2$), disjunction ($\varphi_1 \lor \varphi_2$), implication ($\varphi_1 \rightarrow \varphi_2$) or equivalence ($\varphi_1 \leftrightarrow \varphi_2$) of two FO formulas, or (iv) an existential ($\exists x\, \varphi$) or universal ($\forall x\, \varphi$) quantification of a FO formula.
A first-order (FO) \emph{sentence} is then any FO formula that has no free variables, or put differently, a formula whose variables are all quantified.

A structure $\cM = (A, I)$ \emph{satisfies} the sentence $\varphi$ if $\varphi$ evaluates to ``True'' when we interpret the variables as elements of the domain $A$ and the symbols of $\cS$ according to $I$.
We denote this by $\cM \models \varphi$.
In the case of $\cS_\TO$, we additionally allow ourselves to write $\sigma \models \varphi$ where $\sigma$ is the permutation associated to $\cM$.
Furthermore, the notion of satisfiability can be extended to formulas with free variables, given an assignment of its free variables to values from the domain.
We denote by $\varphi(\mathbf{x})$ an FO formula with free variables $\mathbf{x} = (x_1, \dots, x_k)$.
For a structure $\cM = (A,I)$ and a $k$-tuple $\mathbf{a} = (a_1, \dots, a_k) \in A^k$, we say that $(\cM, \mathbf{a})$ \emph{satisfies} $\varphi(\mathbf{x})$ if $\varphi(\mathbf{x})$ evaluates to ``True'' as before when we additionally interpret each free variable $x_i$ as $a_i$.
We denote this by $(\cM, \mathbf{a}) \models \varphi$.

Finally, we can formally define the notion of a theory.
A \emph{theory} is a set of (FO) sentences, which are called the \emph{axioms} of the theory.
A \emph{model} of a theory is any structure that satisfies all the axioms of the theory.
The \emph{Theory of Two Orders (\TOTO)} requires that both $<_1$ and $<_2$ are linear orders.
It is obvious that this requirement can be straightforwardly stated as FO sentences.
For more details on \TOTO{}, we refer to Albert et al.~\cite{Albert2020}.

\emph{Monadic second-order (MSO) formulas} extend FO formulas by allowing set variables (denoted by capital letters) and quantification over them.
Formally, MSO formula over a signature $\cS$ is formed inductively using the same operations as FO formulas with the addition of existential ($\exists X \, \varphi$) and universal ($\forall X \,\varphi$) set quantifications, and set membership predicates ($x \in X$).
As before, a \emph{monadic second-order sentence} is any MSO formula that has all its variables (representing both elements and sets) quantified.
The satisfiability of an MSO formula $\varphi$ by a permutation $\sigma$ is defined accordingly.

\section{Expressive Power of MSO}\label{sec:expr}

Our first goal is to explore the power of monadic second-order logic to express various properties of permutations.
First, we focus on modular counting.
Secondly, we present natural properties of permutations that are inexpressible in first-order logic but expressible in monadic second-order logic.
And finally, we show that the property of having a fixed point is inexpressible even in monadic second-order logic.


\subsection{Modular Counting}

A commonly studied extension of MSO logic is to allow predicates expressing the cardinality of sets modulo any (fixed) integer.
Formally, \emph{Counting monadic-second order (CMSO) formulas} are obtained by extending the definition of MSO formulas with an additional atomic formula $card_{q,r}(X)$ for any integer $r \ge 2$ and $q \in \{0, \dots, r-1\}$ that is satisfied if and only if $|X| \equiv q \pmod{r}$.
The expressiveness of CMSO logic has been studied, e.g., in~\cite{Courcelle, Elberfeld2016, Ganzow2008}.

It has been observed by Courcelle~\cite{Courcelle96} that CMSO formulas can be expressed in MSO in any structure equipped with a linear order.
Note that this is not true in general as for example, the simple property of having an even number of vertices is not MSO-definable on graphs.
It follows that any predicate $card_{q,r}(X)$ can be expressed by an MSO forumula in \TOTO and thus, for every CMSO sentence in \TOTO there exists an equivalent MSO sentence in \TOTO. 

However, CMSO in \TOTO is capable of expressing more complicated properties.
In particular, we proceed to show that for any MSO formula $\varphi$ in $\TOTO$ with $k$ free (element) variables, we can express the number (modulo fixed integer) of all $k$-tuples from the domain satisfying $\varphi$. 
This will allow us to show that MSO sentences in \TOTO are capable of defining modular constraints on various permutation statistics.

\begin{proposition}\label{pro:tuple-cmso}
Let $\varphi(\mathbf{x})$ be an MSO formula in \TOTO with $k$ free element variables.
For every $r \in \mathbb{N}$ and $q \in \{0,\dots, r-1\}$, there exists an MSO sentence $card_{q,r}^\varphi$ in \TOTO of length $r^{O(k + 1)} \cdot |\varphi|$ such that $\pi \models card_{q,r}^\varphi$ if and only if
\[\left|\{ \bfa \mid \bfa \in \pi^k \land (\pi, \bfa) \models \varphi(\bfx)\}\right| \equiv q \pmod{r}.\]
\end{proposition}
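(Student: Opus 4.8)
The plan is to prove the statement by induction on the number $k$ of free variables, after strengthening it so that $\varphi$ may carry additional free element variables $\mathbf{y}$ playing the role of \emph{parameters}: for every such $\varphi$ I will construct an MSO formula $card_{q,r}^{\varphi}(\mathbf{y})$ with free variables $\mathbf{y}$ such that, for every permutation $\pi$ and every valuation $\mathbf{b}$ of $\mathbf{y}$, we have $(\pi,\mathbf{b}) \models card_{q,r}^{\varphi}$ if and only if $\left|\{\bfa \in \pi^k \mid (\pi,\bfa,\mathbf{b}) \models \varphi\}\right| \equiv q \pmod r$; the Proposition is the case where $\mathbf{y}$ is empty. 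The base case $k = 0$ is immediate: the counted quantity equals $1$ if $(\pi,\mathbf{b}) \models \varphi$ and $0$ otherwise, so $card_{q,r}^{\varphi}$ can be taken to be $\varphi$ if $q = 1$, $\neg\varphi$ if $q = 0$, and a fixed unsatisfiable formula if $q \ge 2$.

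For the inductive step I view $\varphi(x_1,\dots,x_k,\mathbf{y})$ as a formula whose counted variables are $x_1,\dots,x_{k-1}$ and whose parameters are $x_k$ together with $\mathbf{y}$. The induction hypothesis then supplies, for every $j \in \{0,\dots,r-1\}$, a formula $\chi_j(x_k,\mathbf{y})$ which, for a point $a$ assigned to $x_k$, holds exactly when the number of $(k-1)$-tuples $\mathbf{c}$ with $(\pi,\mathbf{c},a,\mathbf{b}) \models \varphi$ is congruent to $j$ modulo $r$. Since exactly one value of $j$ works for each $a$, the sets $A_j := \{a \in S_\pi \mid (\pi,a,\mathbf{b}) \models \chi_j\}$ form a partition of $S_\pi$, and the total number of $k$-tuples satisfying $\varphi$ is congruent modulo $r$ to $\sum_{j=0}^{r-1} j\,|A_j|$. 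Consequently $card_{q,r}^{\varphi}(\mathbf{y})$ will existentially quantify sets $A_0,\dots,A_{r-1}$, pin each of them down via $\forall x\,\bigl(x \in A_j \leftrightarrow \chi_j(x,\mathbf{y})\bigr)$, and assert that $\sum_{j} j\,|A_j| \equiv q \pmod r$.

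The main obstacle is to express this last weighted modular sum by an MSO formula of length polynomial in $r$: the naive approach, a disjunction over all admissible residue vectors $\bigl(|A_0|,\dots,|A_{r-1}|\bigr) \bmod r$ followed by $card_{q,r}$-predicates, has length $r^{\Theta(r)}$ and would break the claimed bound. I would instead exploit that the linear order $<_1$ restricted to the finite set $S_\pi$ is discrete, and guess one further partition $R_0,\dots,R_{r-1}$ of $S_\pi$ that simulates a running counter in $\mathbb{Z}/r\mathbb{Z}$ swept along $<_1$: the $<_1$-minimum point of $S_\pi$ lies in $R_j$ whenever it lies in $A_j$; whenever $y$ is the $<_1$-successor of $x$, membership $x \in R_v$ and $y \in A_j$ forces $y \in R_{(v+j)\bmod r}$; and the $<_1$-maximum point lies in $R_q$. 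For fixed $A_j$'s these sets are uniquely determined, tracing the partial sums of $\sum_j j\,|A_j|$, so the final clause holds precisely when $\sum_j j\,|A_j| \equiv q \pmod r$. This subformula has length $O(r^2)$ (its heart being a conjunction over the $r^2$ pairs $(v,j)$ in the successor clause), with the empty permutation handled as a separate trivial case. Note that this route does not even invoke the CMSO-to-MSO translation of $card_{q,r}$ recalled earlier --- the counter gadget performs exactly that translation, applied directly to a weighted sum.

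It remains to check the length bound and correctness, both of which are routine. Each inductive step adds $r$ new set quantifiers, $r$ copies of the $\chi_j$'s, and an $O(r^2)$-size counter subformula, so if $T(k)$ denotes the length of $card_{q,r}^{\varphi}$ then $T(k) = O(r)\cdot T(k-1) + O(r^2)$ with $T(0) = O(|\varphi|)$, which solves to $T(k) = r^{O(k+1)}\,|\varphi|$ as claimed. Correctness follows from the chain already described: the $\chi_j$'s are correct by the induction hypothesis, the $A_j$'s partition $S_\pi$, the guessed $R_v$'s compute $\sum_j j\,|A_j| \bmod r$, and this sum is congruent modulo $r$ to the number of $k$-tuples of $\pi$ satisfying $\varphi$.
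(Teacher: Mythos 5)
Your proposal is correct and follows essentially the same route as the paper: induction that peels off one counted variable at a time (with the remaining free variables carried as parameters, exactly like the paper's $\psi^\ell_s$), combined with an existentially quantified partition acting as a running counter modulo $r$ swept along $<_1$ via the successor relation, initialized at the $<_1$-minimum and read off at the $<_1$-maximum. The only cosmetic difference is that you first pin down the residue classes $A_j$ by explicit formulas and let the counter refer only to memberships, whereas the paper inlines the lower-level formulas into the counter clauses ($r^2+r$ copies per level instead of your $O(r)$); both variants meet the stated $r^{O(k+1)}\cdot|\varphi|$ bound.
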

\begin{proof}
	We shall inductively define for each $\ell \in \{0, \dots, k\}$ and $s \in \{0, \dots, r-1\}$ a formula $\psi^\ell_s(\mathbf{x})$ with $\ell$ free variables $ \mathbf{x} = (x_1, \dots, x_\ell)$ such that $(\pi, \mathbf{a}) \models \psi^\ell_s(\bfx)$ if and only if
	\[\left|\{ \bfb \mid \bfb \in \pi^{k-\ell} \land (\pi, \bfa \circ \bfb) \models \varphi(\bfx)\}\right| \equiv s \pmod{r}\]
	where $\mathbf{a} \circ \mathbf{b}$ denotes conatenation of the tuples $\mathbf{a}$ and $\mathbf{b}$.
	In other words, $\psi^\ell_s$ verifies that the number of ways to complete its $\ell$ free variables to a $k$-tuple that models~$\varphi$ is congruent to $s$ modulo $r$.
	It then suffices to set $card_{q,r}^\varphi = \psi^0_q$.
		
	The definition of $\psi^k_s$ is immediate.
	Observe that the set $\{ \mathbf{b} \mid \mathbf{b} \in \pi^{0} \land (\pi, \mathbf{a} \circ \mathbf{b}) \models \varphi(\bfx)\}$ contains at most one element, the empty tuple. 
	Thus, we set
	\[ \psi^k_s(\mathbf{x}) = \begin{cases}
		\neg \varphi(\mathbf{x}) &\text{if $s = 0$,}\\
		\varphi(\mathbf{x}) &\text{if $s = 1$, and}\\
		\bot &\text{otherwise.}

	\end{cases}\]

	Now let $\ell < k$ and assume that we have constructed $\psi^{\ell'}_s$ for every $\ell' > \ell$ and $s \in \{0, \dots, r-1\}$.
	We set
	\begin{multline*}
		\psi^\ell_s (\mathbf{x}) = \exists X_0 \exists X_1 \dots \exists X_{r-1} \exists x_{\mathit{first}} \exists x_{\mathit{last}} \\
		\left(
			\begin{aligned}
				 &partition(X_0, \dots, X_{r-1}) \land \; \forall y \; \left(  (x_{\mathit{first}} \le_1 y) \land (y \le_1 x_{\mathit{last}}) \right) \\
				\land \; &\bigwedge_{a = 0}^{r-1} \left(\psi^{\ell + 1}_a(\mathbf{x} \circ (x_{\mathit{first}})) \rightarrow x_{\mathit{first}} \in X_a \; \right)\\
				\land \; & \forall y \forall z \left( S_{<_1}(y,z) \rightarrow \bigwedge_{a = 0}^{r-1} \bigwedge_{b = 0}^{r-1}
				\left( \begin{gathered} \left( y \in X_a \land \psi^{\ell + 1}_b(\mathbf{x} \circ (z)) \right) \\  \rightarrow \;  z \in X_{(a+b) \bmod r}\end{gathered} \right)  \right)\\
				\land \; & x_{\mathit{last}} \in X_s 
			\end{aligned}
		\right)
	\end{multline*}
	where $S_{<_1}(y,z)$ defines the successor relation induced by the order $<_1$, i.e., it verifies that $z$ is the leftmost element to the right of $y$.  
	
	Suppose that we have a permutation $\pi$ of size $n$ and we are evaluating $\psi^\ell_s(\bfx)$ on~$\pi$ with its free variables bound to an $\ell$-tuple $\bfa \in \pi^\ell$.
	Let us explain the semantics of the partition $X_0, \dots, X_{r-1}$.
	An element $p$ lies in $X_t$ if and only if the number of $(k-\ell)$-tuples that (i) complete the tuple $\bfa$ to a $k$-tuple satisfying~$\varphi$, and (ii) start with an element $q$ such that $q \preceq_1 p$, is congruent to~$t$ modulo~$r$.

	Let $p_1, \dots, p_n$ be the elements of $\pi$ ordered from left to right, i.e., according to the order $\prec_1$.
	Inductively, we define for each $j \in [n]$ an index $i_j \in \{0, \dots, r-1\}$ as follows
	\begin{align}
		i_1 &\equiv \left|\{ \bfb \mid \bfb \in \pi^{k - \ell - 1} \land (\pi, \bfa \circ (p_1) \circ \bfb)\} \right| &&\pmod{r}\label{eq:idef-1}\\
		i_{j} \; &\equiv \;  i_{j-1} + \left|\{ \bfb \mid \bfb \in \pi^{k - \ell - 1} \land (\pi, \bfa \circ (p_j) \circ \bfb)\}\right| &&\pmod{r}\label{eq:idef-j}
	\end{align}
	
	Observe that we can rephrase the task to verify that $i_n = s$ since $i_n$ is congruent to the number of all the $k-\ell$ tuples that complete $\bfa$ to a satisfying assignment.

	In one direction, the correctness of the definition of $\psi^\ell_s$ is fairly straightforward.
	We state it as an observation as it suffices to check the evaluation while applying induction hypothesis on every occurrence of $\psi^{\ell+1}_s$.

	\begin{observation}
		Suppose that the number of ways to complete the tuple $\bfa$ to a $k$-tuple is congruent to $s$ modulo $r$.
		Then $(\pi,\bfa) \models \psi^\ell_s$ by setting $x_{\mathit{first}} = p_1$, $x_{\mathit{last}} = p_n$ and $X_t = \{p_j \mid j \in [n] \land i_j = t\}$ for each $t \in \{0, \dots, r-1\}$.
	\end{observation}
	
	The other direction is implied by the following claim which guarantees that $i_n = s$ whenever $(\pi,\bfa) \models \psi^\ell_s$.

	\begin{claim}
		Let $\pi$ be an arbitrary permutation and $\bfa \in \pi^\ell$ a tuple such that $(\pi,\bfa) \models \psi^\ell_s$.
		Then we have that $p_j \in X_{i_j}$ for every $j \in [n]$ and moreover, $p_n \in X_s$.
	\end{claim}
	\begin{proofclaim}
	The second line in the definition of $\psi^\ell_s$ guarantess that $x_{\mathit{first}}$ and $x_{\mathit{last}}$ are bound to the elements $p_1$ and $p_n$, respectively.
The third line enforces that $p_1 \in X_{i_1}$ by its definition in~\eqref{eq:idef-1}.
The fourth line models the equation~\eqref{eq:idef-j} and thus, inductively enforces that $p_j \in X_{i_j}$ for each $j \in [n]$.
Finally, it is directly checked on the fifth line that $p_n \in X_s$.
	\end{proofclaim}

	It remains to bound the size of the constructed formulas.
	Observe that $\psi^\ell_s$ is constructed from $r^2 + r$ copies of $\psi^{\ell+1}_a$ with extra $O(r^2)$ symbols, and the length of $\psi^k_s$ is at most $|\varphi|+1$.
	It follows that $|\psi^\ell_s| = r^{O(k-\ell + 1)} \cdot |\varphi|$.
\end{proof}

Note that Proposition~\ref{pro:tuple-cmso} is not specific to \TOTO and the same holds for any theory of ordered structures.
For such theory with a linear order $<$, it would suffice to replace every occurrence of $<_1$ with $<$ in the proof above.

Next, we provide one example of how to to verify modular constraint on a permutation statistic by an MSO sentence in \TOTO using Proposition~\ref{pro:tuple-cmso}.
This has an interesting corollary with regards to enumeration because it implies that permutations with such property within certain well-structured permutation classes are counted by a rational generating function that is moreover algorithmically computable.

\begin{example}
	We will construct an MSO sentence in \TOTO expressing modular constraint on the value of a well-known permutation statistic, the major index.
	For a permutation $\pi$ of size $n$, the \emph{descent set} of~$\pi$, denoted by $\Des(\pi)$, is the set of all $i \in [n-1]$ such that $\pi_i > \pi_{i+1}$.
	The \emph{major index} of $\pi$, denoted by $\maj(\pi)$, is defined as the sum of its descent set, i.e. $\maj(\pi) = \sum_{i \in \Des(\pi)} i$.

	The crucial ingredient is that the major index can be alternatively computed as the total number of occurrences of constantly many generalized patterns.
	A \emph{vincular pattern} is an extension of classical permutation pattern where we addionally require that certain elements are consecutive.
	This is denoted by drawing line in the one-line notation of permutations above all pairs of elements that are required to be consecutive, e.g., the vincular pattern $1\overline{32}$ is contained in a permutation $\pi$ if $\pi$ contains an occurrence of $132$ where the images of $3$ and $2$ are consecutive with respect to $<_1$. 
	Babson and Steingr\'{i}msson~\cite{Babson2000} observed that major index can be computed from the number of occurrences of vincular patterns in the following way
	\[\maj(\pi) = \occ(1\overline{32}, \pi) + \occ(2\overline{31},\pi) + \occ(3\overline{21},\pi) +\occ(\overline{21},\pi)\]
	where $\occ(\sigma, \pi)$ denotes the number of occurrences of $\sigma$ in $\pi$.

	For any vincular pattern $\sigma$ of size $k$, we can easily construct an FO formula~$\varphi_\sigma$ with $k$ free variables such that a permutation $\pi$ with its $k$-tuple of elements $\bfa$ models $\varphi_\sigma$ if and only if $\bfa$ forms an occurrence of $\sigma$.
	See~\cite{Albert2020} for more details.
	With the use of Proposition~\ref{pro:tuple-cmso}, this allows us to construct an MSO sentence describing all permutations such that their value of major index is divisible by three as follows
	\[\bigvee_{\mathclap{\substack{a,b,c,d \, \in\, \{0,1,2\}\\ a+b+c+d \,\equiv\, 0 \pmod{3}}}} \quad card^{\varphi_{1\overline{32}}}_{a,3} \land card^{\varphi_{2\overline{31}}}_{b,3} \land card^{\varphi_{3\overline{21}}}_{c,3} \land card^{\varphi_{\overline{21}}}_{d, 3}.\]
	
	As a consequence, we can apply the results of Braunfeld~\cite[Theorem 1.1]{Braunfeld2023} to show that the number of permutations with major index divisible by three restricted to any geometric grid class of permutations is enumerated by a rational generating function that can moreover be computed algorithmically.
\end{example}

Babson and Steingr\'{i}msson~\cite{Babson2000} give other examples of permutation statistics expressed as counts of generalized patterns and for each such statistic, we can express modular constraints in MSO sentences in \TOTO.
Moreover, Br\"{a}nd\'{e}n and Claesson~\cite{Branden2011} show that many other permutation statistics can be expressed using the counts of more generalized patterns, called mesh patterns.
These include, e.g., the number of left-to-right maxima or the number of sum-indecomposable components.
The occurrence of a fixed mesh pattern is also easily expressed using an FO formula (see~\cite{Albert2020}) and thus, we can again use Proposition~\ref{pro:tuple-cmso} to encode modularity constraints on all these permutation statistics as well.

\subsection{Merge Classes}

Now let us turn our attention to a more structural kind of properties expressible by MSO logic. We begin with a simple 
example.

\begin{example}
\label{ex:merges}
  We can define a predicate $partition(X,Y)$ enforcing that $X$ and $Y$ form a partition of the domain, and predicates $increasing(X)$ and $decreasing(X)$ enforcing that $X$ is an increasing, respectively decreasing point set as follows
  \begin{align*}
    partition(X,Y) &= \forall x\, \left[(x \in X \lor x \in Y) \land \neg(x \in X \land x \in Y)\right],\\
    increasing(X) &= \forall x, y \left[(x \in X \land y \in X) \rightarrow  (x <_1 y \leftrightarrow x <_2 y)\right],\\
    decreasing(X) &= \forall x, y \left[(x \in X \land y \in X) \rightarrow  (x <_1 y \leftrightarrow y <_2 x)\right].
  \end{align*}
  Using these predicates, we can define permutations obtained as a union of an increasing and a decreasing sequence (known as \emph{skew-merged permutations}) by the MSO sentence
  $\exists X \exists Y  \left(partition(X,Y) \land increasing(X) \land decreasing(Y)\right)$.
\end{example}

We may generalize this example to permutations that can be obtained as a union of 
$k$ permutations, each coming from an arbitrary set defined by a fixed MSO sentence.
Formally, a permutation $\pi$ is a \emph{merge} of permutations $\sigma$ and $\tau$ if we can color the points of $\pi$ with colors red and blue so that the red points are isomorphic to $\sigma$ and the blue ones to~$\tau$.
The \emph{merge} of a class $\cC$ and a class $\cD$ is the class $\cC\odot\cD$ of permutations that can be obtained by merging an element of $\cC$ with an element of~$\cD$.
The following proposition is a straightforward generalization of Example~\ref{ex:merges}.

\begin{proposition}
  \label{pro:mso-merges}
  For arbitrary MSO sentences $\varphi_1, \ldots, \varphi_k$ in \TOTO, we can construct an MSO sentence $\rho$ such that $\pi \models \rho$ if and only if $\pi$ can be obtained as a merge of permutations $\pi_1, \ldots, \pi_k$ such that $\pi_i \models \varphi_i$ for every $i \in [k]$.
\end{proposition}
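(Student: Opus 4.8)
The plan is to reduce the statement to a standard \emph{relativization} construction. Given an MSO sentence $\varphi$ over $\cS_\TO$ and a fresh set variable $X$, I would define by structural induction on $\varphi$ a formula $\varphi^{(X)}$ with the single free set variable $X$, whose intended meaning is ``the substructure induced on the points of $X$ satisfies $\varphi$''. Concretely: atomic formulas (the predicates $x <_1 y$, $x <_2 y$, equalities, and membership $x \in Y$) are left unchanged; Boolean connectives commute with relativization; an element quantifier $\exists x\, \psi$ becomes $\exists x\, (x \in X \land \psi^{(X)})$ and $\forall x\, \psi$ becomes $\forall x\, (x \in X \rightarrow \psi^{(X)})$; and a set quantifier $\exists Y\, \psi$ becomes $\exists Y\, (Y \subseteq X \land \psi^{(X)})$, where $Y \subseteq X$ abbreviates $\forall z\, (z \in Y \rightarrow z \in X)$, and dually for $\forall Y$.

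Then I would prove, by induction on the structure of $\varphi$, the key claim: for every permutation $\pi$ and every subset $A \subseteq S_\pi$, writing $\pi|_A$ for the $\cS_\TO$-structure induced on $A$ (equivalently, the permutation isomorphic to the point set $A$), we have $(\pi, A) \models \varphi^{(X)}$ if and only if $\pi|_A \models \varphi$; more generally the same equivalence holds for formulas with free variables, provided every free element (resp.\ set) variable is interpreted as an element (resp.\ subset) of $A$. The base case is immediate because the orders $<_1, <_2$ on $\pi|_A$ are exactly the restrictions of those of $\pi$; the Boolean cases are trivial; and the quantifier cases follow because the bounds $x \in X$ and $Y \subseteq X$ restrict the ranges of quantification precisely to the elements and the subsets of $A$.

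With this in hand, I would set
\[
\rho \;:=\; \exists X_1 \cdots \exists X_k \left( partition(X_1, \ldots, X_k) \;\land\; \bigwedge_{i=1}^{k} \varphi_i^{(X_i)} \right),
\]
where $partition(X_1,\ldots,X_k)$ is the obvious generalization of the predicate from Example~\ref{ex:merges}, asserting that every element of the domain lies in exactly one $X_i$ (empty parts allowed). For correctness: if $\pi \models \rho$ with witnesses $A_1, \ldots, A_k$, then coloring each point $p$ by the unique $i$ with $p \in A_i$ exhibits $\pi$ as a merge of $\pi_1 := \pi|_{A_1}, \ldots, \pi_k := \pi|_{A_k}$, and by the claim $\pi_i \models \varphi_i$ for each $i$. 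Conversely, if $\pi$ is a merge of $\pi_1, \ldots, \pi_k$ via a coloring $c \colon S_\pi \to [k]$ with $\pi_i \models \varphi_i$, then the sets $A_i := c^{-1}(i)$ satisfy $partition(A_1,\ldots,A_k)$, and since $\pi|_{A_i}$ is isomorphic to $\pi_i$ and MSO truth is isomorphism-invariant, $\pi|_{A_i} \models \varphi_i$, hence $(\pi, A_i) \models \varphi_i^{(X_i)}$ by the claim; therefore $\pi \models \rho$.

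There is no serious obstacle here; the argument is entirely routine. The only point that warrants care is the relativization of set quantifiers --- one must bound quantified sets to \emph{subsets} of $X$, not merely constrain their elements at the point of use --- together with the observation that taking an induced sub-point-set coincides with taking an induced substructure over $\cS_\TO$, which is exactly what makes relativization faithfully capture the notion of ``being the permutation formed by a color class''.
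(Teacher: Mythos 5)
Your proposal is correct and follows essentially the same route as the paper: relativize each $\varphi_i$ to a set variable $X_i$ by bounding element quantifiers with $x \in X_i$ and set quantifiers with $Y \subseteq X_i$, then existentially quantify a partition $X_1,\ldots,X_k$ of the domain and conjoin the relativized sentences. The only difference is that you spell out the inductive correctness claim for relativization, which the paper treats as immediate.
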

\begin{proof}
  For an MSO sentence $\varphi$, observe that we can define an MSO predicate $\varphi(X)$ that limits the domain of all variables in $\varphi$ to the set $X$ and thus, effectively tests if the set $X$ satisfies $\varphi$.
Technically, this is done by replacing
\begin{align*}
	\exists x \, \psi \, &\longrightarrow\,  \exists x \, ( x \in X \land \psi)  \\
	\forall x \, \psi \, &\longrightarrow\,  \forall x \, ( x \in X \rightarrow \psi)  \\
	\exists Y \, \psi \, &\longrightarrow\,  \exists Y \, ( \forall x\, (x \in Y \rightarrow x \in X) \land \psi) \\
	\forall\, Y \, \psi \, &\longrightarrow\,  \forall Y \, ( \forall x\, (x \in Y \rightarrow x \in X) \rightarrow \psi).
\end{align*}
The desired sentence is then obtained easily using the predicate for testing whether a $k$-tuple of sets $X_1, \ldots, X_k$ forms a partition of the domain as
\[\rho = \exists X_1 \exists X_2\, \cdots\, \exists X_k \left(partition(X_1, \ldots, X_k) \land \bigwedge_{i=1}^k \varphi_i(X_i)\right).\qedhere\]
\end{proof}

The notion of merges is not merely an artificial example of an MSO-definable 
property; in fact, the concept of merging has recently attracted a fair amount of 
attention.
It has been originally introduced as an approach for the
enumeration of pattern-avoiding permutations~\cite{Albert07,Albert2019}.
Most notably, Claesson et al.~\cite{Claesson2012} have shown that every 1324-avoiding permutation can
be obtained as a merge of a 132-avoiding permutation with a 213-avoiding one, and
this result, together with its subsequent strengthenings by Bóna~\cite{Bona2014} and
Bevan et al.~\cite{Bevan2017}, are the basis of the best known upper bounds for the
number of 1324-avoiding permutations.
Apart from enumeration questions, the research on permutation merges has also focused on structural 
issues, such as whether a given permutation class can be
obtained by merging two of its proper subclasses~\cite{Jelinek2015,Jelinek2017}, or which
classes can be obtained by merging a bounded number of permutations from a given
class~\cite{Albert2016a,Vatter2016}.

In contrast with Proposition~\ref{pro:mso-merges}, we will show that some merges cannot be 
expressed by an FO sentence.
Recall that a permutation is simple if it cannot be obtained as an inflation of a strictly smaller permutation.
We will show that for any simple permutation $\alpha$ of length at least 4, there is no FO 
sentence expressing that a permutation is a merge of two $\alpha$-avoiding permutations.
To this end, we first introduce a standard tool for proving inexpressibility in logic -- the 
Ehrenfeucht--Fraïssé games.

\paragraph*{Ehrenfeucht--Fraïssé games.}
Let $(A, I)$ and $(B, J)$ be two models of the same theory, and let $k$ be a positive integer.
The \emph{$k$-move Ehrenfeucht--Fraïssé (EF) game} is a game between two players, called Spoiler 
and Duplicator, on the models $(A, I)$ and $(B, J)$ with the following rules.
Spoiler begins and the players alternate in moves until they both had exactly $k$ turns.
In the $i$-th turn, Spoiler chooses either an element $a_i \in A$ or $b_i \in B$ and Duplicator replies by
choosing an element of the other model; that is, either Spoiler chooses an element $a_i \in A$ and
Duplicator responds by choosing $b_i \in B$, or Spoiler chooses an element $b_i \in B$ and Duplicator
responds by choosing $a_i \in A$.
At the end of the game, Duplicator wins if the map $a_i \mapsto b_i$ is an isomorphism between the submodels
induced by $\{a_i \mid i \in [k]\}$ and $\{b_i \mid i \in [k]\}$.
Otherwise, Spoiler wins.

We assume that both players play optimally and we say that Duplicator \emph{wins} a given EF game, if he has a winning strategy.
We denote by $(A, I) \sim_k (B, J)$ that Duplicator wins in the $k$-move EF game on $(A, I)$ and $(B, J)$.

Before establishing the connection of EF games to FO logic, we need one more definition.
The \emph{quantifier depth of an FO formula $\varphi$}, denoted by $\qd(\varphi)$, is defined recursively as (i) $\qd(\varphi) = 0$ for any atomic formula $\varphi$, (ii) $\qd(\neg \varphi) = \qd(\varphi)$, (iii) $\qd(\varphi\, \square\, \psi) = \max(\qd(\varphi), \qd(\psi))$ for any binary operation $\square$, and (iv) $\qd(\exists x \varphi) = \qd(\forall x \varphi) = \qd(\varphi) + 1$.


\begin{theorem}[\cite{Fraisse1954,Ehrenfeucht1960}]
  For two models $(A,I)$ and $(B,J)$ of the same theory, we have $(A, I) \sim_k (B, J)$ if and only if $(A,I)$ and $(B,J)$ satisfy the same set of sentences of quantifier depth at most $k$.
\end{theorem}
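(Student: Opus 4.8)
The plan is to prove both directions simultaneously by introducing, for every model $\mathcal{M} = (A,I)$, every $k \in \mathbb{N}$, and every tuple $\bar a = (a_1, \dots, a_m) \in A^m$, a single formula $\chi^k_{\mathcal{M}, \bar a}(x_1, \dots, x_m)$ of quantifier depth exactly $k$ — the \emph{Hintikka formula} (equivalently, the rank-$k$ type) of $(\mathcal{M}, \bar a)$ — which axiomatizes the set of all formulas of quantifier depth at most $k$ satisfied by $(\mathcal{M}, \bar a)$. Concretely, I would prove by induction on $k$ that for every model $\mathcal{N} = (B,J)$ and tuple $\bar b \in B^m$ the following three conditions are equivalent: (i) $\mathcal{N} \models \chi^k_{\mathcal{M}, \bar a}(\bar b)$; (ii) $(\mathcal{M}, \bar a)$ and $(\mathcal{N}, \bar b)$ satisfy the same formulas of quantifier depth at most $k$ in the free variables $x_1, \dots, x_m$; (iii) Duplicator wins the $k$-move EF game on $\mathcal{M}$ and $\mathcal{N}$ in the position where the first $m$ rounds have already selected $\bar a$ on one side and $\bar b$ on the other. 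Specialising to $m = 0$ then yields precisely the statement of the theorem, with $\chi^k_{\mathcal{M}}$ a finite ``characteristic sentence'' of $\mathcal{M}$ at depth $k$.

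The base case $k=0$ is immediate: $\chi^0_{\mathcal{M}, \bar a}$ is the conjunction of all atomic and negated atomic formulas in $x_1,\dots,x_m$ true of $\bar a$ (a finite list, since the signature is finite and relational), condition (ii) is by definition agreement on quantifier-free formulas, and condition (iii) says that $a_i \mapsto b_i$ is an isomorphism of the induced submodels; all three say the same thing. For the inductive step I would set
\[
\chi^{k+1}_{\mathcal{M}, \bar a}(\bar x) \;=\; \Bigl(\bigwedge_{a' \in A} \exists y\; \chi^{k}_{\mathcal{M},\, \bar a\,a'}(\bar x, y)\Bigr) \;\wedge\; \forall y\; \bigvee_{a' \in A} \chi^{k}_{\mathcal{M},\, \bar a\,a'}(\bar x, y),
\]
and verify that the left (``forth'') conjunct captures, via the induction hypothesis on the extended tuples, the requirement that every element Spoiler can pick in $\mathcal{N}$ is matched by some element in $\mathcal{M}$, while the right (``back'') conjunct captures the symmetric requirement; unwinding the EF game by one round gives (i)$\Leftrightarrow$(iii), and (i)$\Leftrightarrow$(ii) follows because, up to logical equivalence, every formula of quantifier depth at most $k+1$ is a Boolean combination of atomic formulas and formulas of the shape $\exists y\, \psi(\bar x, y)$ with $\qd(\psi) \le k$, each of which the displayed formula decides.

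The one genuinely delicate point — and the place where the argument could fail if one is careless — is that the conjunction and disjunction ``over all $a' \in A$'' must in fact be \emph{finite}, so that $\chi^{k+1}_{\mathcal{M}, \bar a}$ is an honest formula of the logic. This is exactly where finiteness of the (purely relational) signature is used: by the induction hypothesis there are only finitely many pairwise inequivalent Hintikka formulas with $m+1$ free variables at depth $k$, so as $a'$ ranges over $A$ the formula $\chi^{k}_{\mathcal{M},\, \bar a\,a'}$ realises only finitely many distinct values and $A$ may be replaced by a finite set of representatives. In the present paper this is a non-issue: the signature $\cS_\TO$ has only the two binary symbols $<_1, <_2$, and permutation models are finite to begin with, so the finiteness is automatic and the sole subtlety evaporates. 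With that settled, the three-way induction closes and the $m=0$ case is the desired characterisation of $\sim_k$.
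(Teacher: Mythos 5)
The paper offers no proof of this statement at all: it is quoted as the classical Fraïssé--Ehrenfeucht theorem with citations only, so there is nothing internal to compare against and your argument has to stand on its own --- and it does. What you give is the standard Hintikka-formula (rank-$k$ type) proof: the three-way induction on $k$ relating satisfaction of $\chi^k_{\mathcal{M},\bar a}$, agreement on formulas of quantifier depth at most $k$, and Duplicator winning the $k$-round game from the position $(\bar a,\bar b)$, with the theorem obtained at $m=0$. You also correctly isolate the one genuinely delicate point, namely that the conjunction and disjunction over $a'\in A$ collapse to finite ones because a finite relational signature admits only finitely many pairwise inequivalent rank-$k$ Hintikka formulas in a fixed number of variables; this is precisely the hypothesis under which the theorem holds (the direction ``same depth-$k$ sentences implies $\sim_k$'' can fail over infinite signatures) and it is satisfied by $\cS_\TO$. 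Two small points you should make explicit when writing it out: first, the direction (ii)$\Rightarrow$(i) needs the easy but necessary observation that $(\mathcal{M},\bar a)\models\chi^{k}_{\mathcal{M},\bar a}$ and that $\chi^{k}_{\mathcal{M},\bar a}$ itself has quantifier depth at most $k$, so agreement transfers it to $(\mathcal{N},\bar b)$; second, the claim that the displayed formula ``decides'' every formula $\exists y\,\psi(\bar x,y)$ with $\qd(\psi)\le k$ should be justified from the induction hypothesis together with the fact that the rank-$k$ Hintikka formulas partition all $(m+1)$-tuples up to depth-$k$ equivalence. Compared with Ehrenfeucht's original presentation, which treats the two directions separately (formula induction for ``$\sim_k$ implies agreement'', and an explicit distinguishing formula extracted from a Spoiler strategy for the converse), your single-induction version is equivalent and arguably cleaner, at the cost of front-loading the finiteness bookkeeping for the characteristic formulas.
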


Let us provide a simple example of EF games on linear orders.
Formally, the \emph{Theory of Linear Orders} (\TOLO) is defined on the signature with a single binary relation symbol $<$ where the axioms enforce that $<$ is a linear order.
For simplicity, we write the models of \TOLO{} as pairs $(A, \prec)$ where $\prec$ is a linear order on the domain $A$.
It is well-known that we cannot distinguish two sufficiently large linear orders with FO sentences of a fixed quantifier depth.

\begin{proposition}[{\cite{Gradel2007}}]
  \label{pro:ef-linear-orders}
  Let $k$ be a positive integer. If $(A, \prec_A)$ and $(B, \prec_B)$ are finite models of \TOLO{} such that $|A|, |B| \geq 2^k - 1$, then $(A, \prec_A) \sim_k (B, \prec_B)$ and thus, $(A, \prec_A)$ and $(B, \prec_B)$ satisfy the same set of sentences of quantifier depth at most $k$.
\end{proposition}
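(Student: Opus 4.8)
The plan is to prove the statement directly, by exhibiting a winning strategy for Duplicator via the standard back-and-forth argument with halving gaps (cf.\ \cite{Gradel2007}); the equivalence with depth-$\le k$ sentences is then immediate from the Ehrenfeucht--Fra\"iss\'e theorem quoted above. Given finite linear orders $(A,\prec_A)$ and $(B,\prec_B)$ with at least $2^k-1$ elements each, Duplicator maintains the following invariant after $i$ rounds. Write $a_{(1)} \prec_A \cdots \prec_A a_{(i')}$ for the \emph{distinct} elements of $A$ that have been selected so far (on either side), listed in increasing order, and similarly $b_{(1)} \prec_B \cdots \prec_B b_{(i')}$; these cut $A$ and $B$ into $i'+1$ consecutive \emph{blocks} (the $0$-th block of $A$ being $\{x : x \prec_A a_{(1)}\}$, the last being $\{x : a_{(i')} \prec_A x\}$, and so on). The invariant is: (a) the correspondence $a_{(j)} \mapsto b_{(j)}$ is order-preserving and Spoiler's selected elements are matched consistently (so that the chosen elements induce an isomorphism of the two induced suborders), and (b) for every $t$, the $t$-th block of $A$ and the $t$-th block of $B$ either have equal size or both have size at least $2^{k-i}-1$.

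Before round $1$ the invariant holds vacuously for (a) and with a single block of size $|A|\ge 2^k-1$ on one side and $|B|\ge 2^k-1$ on the other for (b). For the inductive step, assume the invariant holds after $i$ rounds and that in round $i+1$ Spoiler picks an element in $A$ (the other case is symmetric, exchanging the roles of $A$ and $B$). If Spoiler picks an already-chosen element $a_{(j)}$, Duplicator answers $b_{(j)}$ and nothing changes. Otherwise the picked element lies strictly inside some block of $A$, splitting it into a left part of size $p$ and a right part of size $q$; let $m$ and $m'$ be the sizes of this block in $A$ and of the corresponding block in $B$. If $m=m'$, Duplicator picks the element of $B$'s block having exactly $p$ elements to its left, so the two new left blocks have size $p$ and the two new right blocks have size $q$. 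If $m\neq m'$, then by clause (b) we have $m,m'\ge 2^{k-i}-1$, and Duplicator distinguishes three cases: if $p<2^{k-i-1}-1$ he matches $p$ exactly on the $B$-side (the left blocks then agree and, since $q=m-1-p\ge 2^{k-i-1}$ and $m'-1-p\ge 2^{k-i-1}$, both right blocks have size at least $2^{k-(i+1)}-1$); symmetrically if $q<2^{k-i-1}-1$ he matches $q$ exactly; and if $p,q\ge 2^{k-i-1}-1$, then $m'-1\ge 2^{k-i}-2=2(2^{k-i-1}-1)$, so he can split $B$'s block into parts of sizes $p',q'$ with $p'=2^{k-i-1}-1$ and $q'=m'-2^{k-i-1}\ge 2^{k-i-1}-1$. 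In all cases the new configuration satisfies clause (b) with parameter $2^{k-(i+1)}-1$, and clause (a) is preserved because the new element was placed in the corresponding block and on the matching side.

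Since the parameter $2^{k-i}-1$ is at least $1$ whenever $i<k$, the corresponding block in $B$ is never empty at the moment Spoiler picks a genuinely new element, so Duplicator's response is always defined. After $k$ rounds, clause (a) states precisely that the chosen elements induce an isomorphism of the two suborders, so Duplicator wins the $k$-move game; hence $(A,\prec_A)\sim_k (B,\prec_B)$, and the last assertion follows from the Ehrenfeucht--Fra\"iss\'e theorem. The only delicate point is the bookkeeping in the inductive step: verifying that the three subcases are exhaustive and that the stated bounds do yield blocks of size $\ge 2^{k-(i+1)}-1$. This is a routine computation once the invariant is set up with exactly the thresholds $2^{k-i}-1$, and it is also where the hypothesis $|A|,|B|\ge 2^k-1$ is used in the tightest way (namely in the third subcase, which needs $m'\ge 2^{k-i}-1$).
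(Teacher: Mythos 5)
Your proof is correct: the invariant with thresholds $2^{k-i}-1$, the three subcases, and the non-emptiness check all go through, and the final step via the Ehrenfeucht--Fra\"iss\'e theorem is valid. The paper itself gives no proof of this proposition (it is quoted from the cited reference), and your argument is exactly the standard back-and-forth halving strategy found there, so there is nothing to reconcile.
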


Now we possess all the tools necessary to prove that FO sentences in \TOTO{} are not powerful enough 
to express the property that a permutation can be merged from two smaller permutations 
avoiding a fixed simple pattern.
Note that such property is MSO-definable by Proposition~\ref{pro:mso-merges}.

\begin{theorem}
  \label{thm:fo-merges}
  Let $\alpha$ be a simple permutation of length at least 4.
  The class $\Av(\alpha) \odot \Av(\alpha)$ is not definable by an FO sentence in \TOTO.
\end{theorem}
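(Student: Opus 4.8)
The plan is to use an Ehrenfeucht--Fra\"iss\'e argument. Fix $k$ and the simple pattern $\alpha$ of length $m \ge 4$. I want to produce two permutations $\pi_{\mathrm{yes}}$ and $\pi_{\mathrm{no}}$ of equal quantifier-depth-$k$ theory (i.e.\ $\pi_{\mathrm{yes}} \sim_k \pi_{\mathrm{no}}$ as $\cS_\TO$-structures) such that $\pi_{\mathrm{yes}} \in \Av(\alpha) \odot \Av(\alpha)$ but $\pi_{\mathrm{no}} \notin \Av(\alpha) \odot \Av(\alpha)$. Since Theorem~1.19 (the EF theorem) says that any FO sentence $\varphi$ of quantifier depth at most $k$ cannot separate $\pi_{\mathrm{yes}}$ from $\pi_{\mathrm{no}}$, and since $k$ is arbitrary, no single FO sentence can define the class.

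The natural construction is to take a large ``grid-like'' inflation of $\alpha$ itself. Concretely, let $\pi_N = \alpha[\iota_N, \iota_N, \dots, \iota_N]$ be the inflation of $\alpha$ by $m$ increasing runs each of length $N$ (or a similar homogeneous inflation, possibly mixing increasing and decreasing blocks chosen so that the block pattern is still essentially $\alpha$). The first key claim is combinatorial: for $N$ large enough, any $2$-coloring of $\pi_N$ forces a monochromatic occurrence of $\alpha$, hence $\pi_N \notin \Av(\alpha)\odot\Av(\alpha)$; this is a Ramsey-type statement following from the fact that each block, being a long monotone sequence, contains a long monochromatic monotone subsequence (Erd\H{o}s--Szekeres / pigeonhole), and picking one colour that appears ``long'' in each of the $m$ blocks and reconstructing $\alpha$ from the block structure of the simple permutation $\alpha$. (Simplicity of $\alpha$ is what guarantees the inflation genuinely needs all $m$ blocks and that a spread-out choice of points, one per block, realizes $\alpha$.) Then I modify $\pi_N$ slightly to get $\pi_N'$ that \emph{is} a merge of two $\alpha$-avoiders — e.g.\ by replacing one of the monotone blocks by two interleaved shorter monotone blocks of opposite colour classes, or by splitting $\pi_N$ into two layers that individually avoid $\alpha$ because one of the $m$ blocks has been emptied in each layer; the point is to make a small, FO-indistinguishable perturbation.

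The EF part then amounts to showing $\pi_N \sim_k \pi_N'$ for $N$ large relative to $k$. Here I would use a composition/Feferman--Vaught style argument: both permutations are inflations of the fixed permutation $\alpha$ by monotone (or near-monotone) blocks, the block structure is definable, and within each block the relevant substructure is a large linear order — two large linear orders are $\sim_k$-equivalent once they both have size $\ge 2^k-1$ by Proposition~1.18. The $\sim_k$-equivalence of the blocks lifts to $\sim_k$-equivalence of the whole inflations over the common base $\alpha$, by Duplicator playing block-by-block: whenever Spoiler picks a point in some block, Duplicator answers in the corresponding block using the block-level strategy, and the cross-block order relations are determined entirely by $\alpha$, which is the same on both sides. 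The perturbation that turns $\pi_N$ into $\pi_N'$ only alters the internal order-type of a bounded number of blocks while keeping each of them a large monotone (or bounded-union-of-monotone) sequence, so the block-level $\sim_k$-equivalence is preserved.

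The main obstacle I anticipate is the Ramsey/forcing step rather than the EF step: I need a clean argument that the ``no''-instance $\pi_N$ genuinely cannot be $2$-colored into two $\alpha$-avoiders, and simultaneously that the ``yes''-instance $\pi_N'$ really is such a merge, while keeping $\pi_N$ and $\pi_N'$ close enough that the block-wise Duplicator strategy goes through. Getting all three requirements — unavoidability of a monochromatic $\alpha$, existence of a good $2$-coloring after a tiny change, and FO-indistinguishability — to hold for the \emph{same} pair of permutations is the delicate balancing act; this is precisely where the hypothesis that $\alpha$ is simple of length $\ge 4$ should be used, presumably to ensure that $\alpha$-occurrences in a homogeneous inflation of $\alpha$ must essentially respect the block decomposition, so that emptying one block per colour class suffices to destroy all monochromatic copies.
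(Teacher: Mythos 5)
Your central combinatorial claim---that for large $N$ the homogeneous inflation $\pi_N=\alpha[\iota_N,\dots,\iota_N]$ lies outside $\Av(\alpha)\odot\Av(\alpha)$---is false, and this sinks the construction. Color the block inflating the first entry of $\alpha$ red and all remaining blocks blue. The red class is an increasing sequence, hence avoids $\alpha$ (which is simple of length $\ge 4$, so not monotone). The blue class is $\beta[\iota_N,\dots,\iota_N]$ where $\beta$ is a proper pattern of $\alpha$, so $\beta\in\Av(\alpha)$; and since $\alpha$ is simple, $\Av(\alpha)$ is closed under inflations (a fact the paper's proof also records), so the blue class avoids $\alpha$ as well. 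Thus $\pi_N\in\Av(\alpha)\odot\Av(\alpha)$ for every $N$ and every such $\alpha$ (concretely, for $\alpha=3142$ the two-and-two block coloring gives two copies of $21[\iota_N,\iota_N]$, each $3142$-avoiding). The Ramsey sketch fails exactly where you wave at it: Erd\H{o}s--Szekeres gives a long monochromatic monotone subsequence in each block, but it can be a different color in different blocks, so no monochromatic transversal realizing $\alpha$ is forced, and block-respecting colorings escape entirely. Ironically, the hypothesis ``$\alpha$ simple'' works against you here, via closure under inflation.

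Beyond this, the part you defer (``make a small, FO-indistinguishable perturbation'' so that membership flips) is the actual heart of the theorem, and a bounded local modification of a fixed structure is not the right shape of idea: you need membership to hinge on a global quantity that depth-$k$ FO cannot measure. The paper achieves this with a parity gadget rather than a perturbation: $\pi_\ell$ is a clockwise spiral of $4\ell+2$ blocks carrying three ``tracks'' of arrows (copies of $\alpha$ with one extreme point deleted). An initial block forces the two special chunks to get opposite colors in any admissible coloring, the range condition forces colors to alternate along each track, and a special block isomorphic to $231[\alpha^\la,\alpha^\la,\alpha^\la]$ swaps two tracks; consequently $\pi_\ell\in\Av(\alpha)\odot\Av(\alpha)$ iff $\ell$ is odd, while $\pi_n\sim_k\pi_m$ for all $n,m\ge 2^{k+1}-2$ by reducing the EF game to the EF game on long linear orders (your block-by-block Duplicator idea is essentially the right tool for this last step, but it is applied to spiral lengths, not to a perturbed pair). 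Note also that simplicity and length $\ge 4$ are used in the paper to guarantee $\alpha$ contains $3142$ or $2413$, which the geometry of the spiral and the deflation argument rely on---not to force monochromatic copies in homogeneous inflations. As it stands, your proposal has no mechanism producing a yes/no pair at all, so the gap is not a technicality but the main content of the proof.
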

\begin{proof}
Let us start with some observations and assumptions.
First, let $m$ be the length of $\alpha$ and observe that $\alpha$ must contain either $2413$ or $3142$, because otherwise it would be separable and there are no separable simple permutations of length larger than~3.
Furthermore, we can assume without loss of generality that $\alpha$ contains $3142$, as otherwise its reverse $\alpha^r$ would contain $3142$ and clearly $\Av(\alpha) \odot \Av(\alpha)$ is definable by an FO sentence if and only if $\Av(\alpha^r) \odot \Av(\alpha^r)$ is.
Additionally, the class $\Av(\alpha)$ is closed under inflations, i.e. for any $\tau \in \Av(\alpha)$ of length $k$ and any $k$ permutations $\sigma_1, \dots, \sigma_k \in \Av(\alpha)$ the inflation $\tau[\sigma_1, \dots, \sigma_k]$ is also $\alpha$-avoiding.

We define $\alpha^\ra$, $\alpha^\la$, $\alpha^\ta$ and $\alpha^\da$ to be the permutations obtained from $\alpha$ by removing the rightmost, leftmost, topmost and bottommost element, respectively.
We say that a point set $P$ forms a \emph{right arrow} if it is isomorphic to $\alpha^\ra$.
Furthermore, we say that a point $p$ is in the range of the right arrow $P$, if $p$ lies to the right of $P$ and the point set $P \cup \{p\}$ is isomorphic to $\alpha$.
Similarly, we define \emph{top, left and down arrows} as point sets isomorphic to $\alpha^\ta, \alpha^\la$ and $\alpha^\da$, respectively.
Their ranges are defined analogously.

An \emph{admissible coloring} of a permutation $\pi$ is a 2-coloring $\psi\colon \pi \to\{\text{red}, \text{blue}\}$ such that $\pi$ does not contain a monochromatic copy of $\alpha$.
Assume we have a permutation $\pi$ with an admissible coloring.
Observe that if $\pi$ contains a monochromatic arrow of any orientation, say red, then all the points in the range of the arrow must be colored by the other color, i.e. blue.

Our goal is to construct, for each positive $k$, two permutations that are indistinguishable by FO sentences of quantifier depth $k$ and simultaneously, only one of them belongs to the class $\Av(\alpha) \odot \Av(\alpha)$.
To that end, we define a permutation~$\pi_\ell$ for each positive integer $\ell$.
We build $\pi_\ell$ from $4 \ell + 2$ blocks $B_1, B_2, \dots, B_{4\ell + 2}$ forming a clockwise spiral starting with the innermost block $B_1$.
For instance, if $i$ is a multiple of four then any element in the block $B_i$ is to the left of all the elements in the block $B_{i-1}$ , and it is to the right and below all the elements in the blocks $B_1, \dots, B_{i-2}$.
Refer to Figure~\ref{fig:merges-ef}.

\begin{figure}
  \centering
  \includegraphics{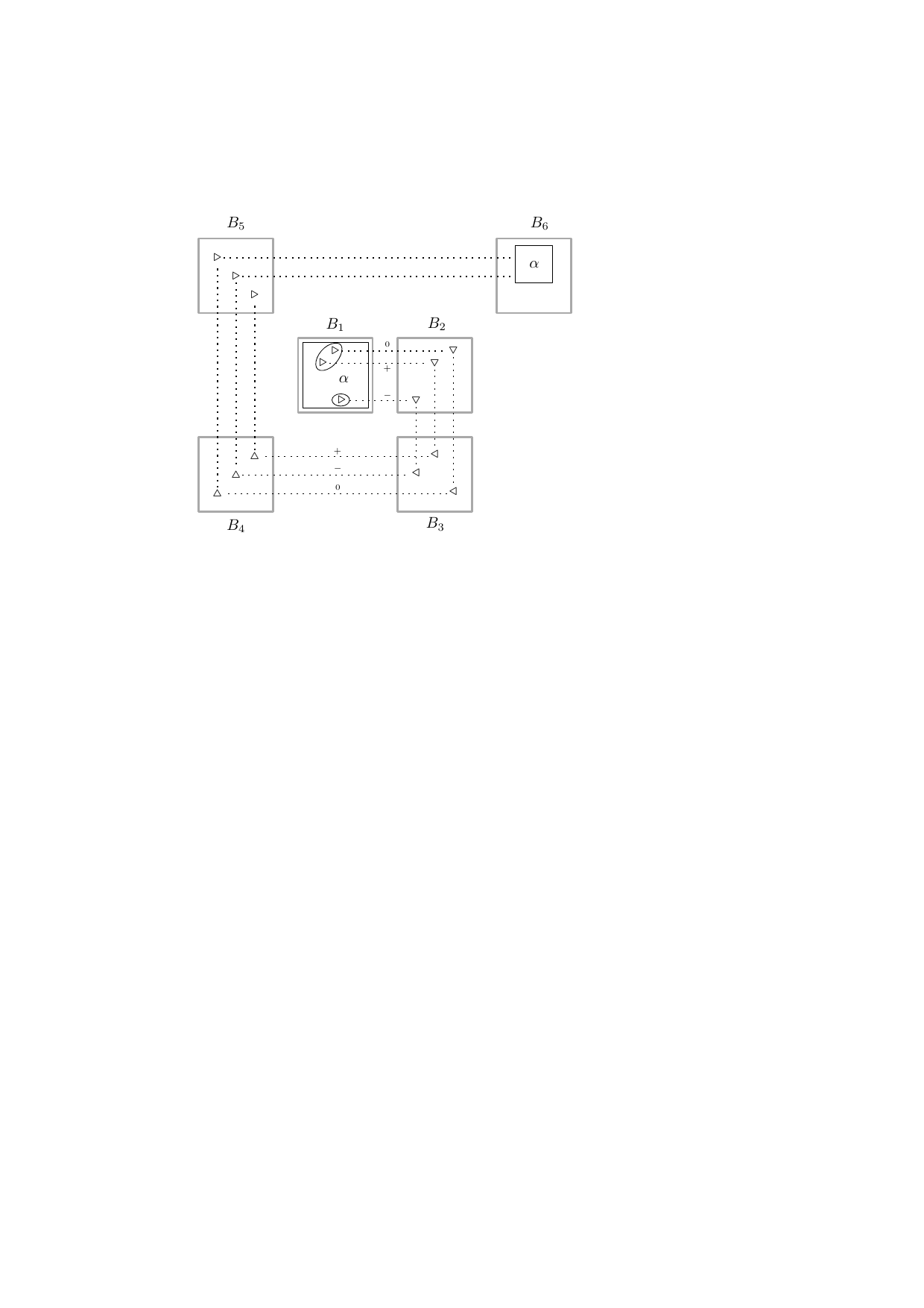}
  \caption{Construction of the permutation $\pi_\ell$ for $\ell = 1$ in the proof of Theorem~\ref{thm:fo-merges} that contains three tracks of arrows -- ground ($0$), positive ($+$) and negative ($-$).   }
  \label{fig:merges-ef}
\end{figure}

The first block $B_1$ consists of the permutation $\alpha$ with the topmost point inflated by $\alpha^\ra\oplus\alpha^\ra$, the bottommost point inflated by $\alpha^\ra$, and every other point inflated with $\alpha$.
We  will call the inflated topmost and bottommost elements of $\alpha$ the \emph{top chunk} and \emph{bottom chunk}, respectively.
Observe that in any admissible coloring of $B_1$, both the top chunk and the bottom 
chunk must be monochromatic and moreover, they must use different colors.
This is because all the other inflated points of $\alpha$ necessarily contain elements of both colors.
Therefore, a pair of elements in the top and bottom chunks sharing the same color would create a monochromatic copy of $\alpha$.
The last block $B_{4\ell+2}$ contains only the permutation~$\alpha$.

Let us now describe the intermediate blocks $B_2, \dots, B_{4\ell+1}$.
Recall that the first block $B_1$ contains two right arrows in its chunk and one right arrow in its bottom chunk.
The block $B_2$ will contain 3 down arrows, each down arrow placed fully in the range of one of the 3 right arrows in $B_1$.
Similarly, the block $B_3$ will contain 3 left arrows, each in the range of a distinct down arrow from $B_2$.
Generally, the block $B_i$ for any $i \in \{2,\dots, 4\ell+2\}$ will consist of 3 disjoint arrows, all oriented towards $B_{i+1}$, and each of them inside the range of a distinct arrow in $B_{i-1}$.

We continue by specifying the relative position of the arrows inside each block~$B_i$.
For every even $i$, the arrows inside the block $B_i$ form an increasing sequence.
In other words, $B_i$ is isomorphic to a direct sum of three arrows.
For odd $i$, we distinguish two cases.
If $i = 4t + 1$ for some $t$, the right arrows inside the block $B_i$ form a decreasing sequence.
Finally if $i = 4t + 3$ for some $t$, the block $B_i$ is isomorphic to $231$ inflated with three left arrows, i.e. $231[\alpha^\la,\alpha^\la,\alpha^\la]$.
We say that $B_i$ is a \emph{monotone block} whenever the arrows in $B_i$ form a monotone sequence.
Moreover, we distinguish the arrows in monotone blocks based on the distance form the center of the spiral as \emph{inner}, \emph{middle} and \emph{outer}.
In the case when $i = 4t+3$, it makes no sense to define inner and middle arrows.
However, we say that the arrow inflating the element `1' of $231$ is also an outer arrow.

Notice that the arrows form three disjoint sequences such that each sequence contains exactly one arrow in each of the blocks $B_1, \dots, B_{4\ell + 1}$ and moreover, the arrow in the block $B_i$ for $i \ge 2$ lies in the range of the arrow in the block $B_{i-1}$.
We call these sequences of arrows \emph{tracks}.
In particular, one track contains all the outer arrows and the remaining two switch between middle and inner arrows whenever they pass through a block $B_{4t+3}$ for some~$t$.
The track containing all the outer arrows is called the \emph{ground}, the other track containing an arrow from the top chunk of $B_1$ is called \emph{positive} and the track starting with the arrow in the bottom chunk of $B_1$ is called \emph{negative}.

Finally, it remains to describe the relative positions between the points in the blocks $B_{4\ell + 1}$ and $B_{4\ell+2}$.
We place the topmost point in $B_{4\ell + 2}$ in the range of the outer right arrow in $B_{4\ell + 1}$ and we place the remaining points in the range of the middle right arrow in $B_{4\ell + 1}$.
See Figure~\ref{fig:merges-ef}.

\begin{claim}\label{claim:piell-parity}
	The permutation $\pi_\ell$ belongs to $\Av(\alpha) \odot \Av(\alpha)$ if and only if $\ell$ is odd.
\end{claim}

\begin{proofclaim}
Assume that there is an admissible coloring of $\pi_\ell$.
We have already noticed that the top and bottom chunk in the block $B_1$ are both monochromatic and colored by opposite colors.
Without loss of generality, we assume that the top chunk receives the color red.
As we also observed, the colors of arrows on a single track must alternate and therefore, the rest of the admissible coloring of $\pi_\ell$ is uniquely determined.
In particular, the outer arrow in the block $B_i$ is colored red if and only if $i$ is odd.
The same holds for the arrows contained in the positive track while the arrow in $B_i$ contained in the negative track is red if and only if $i$ is even.

Observe that the middle arrow of a monotone block $B_i$ belongs to the positive track if and only if $\lfloor i/4\rfloor$ is even.
This follows since for every $t$, the block $B_{4t+3}$ flips the positive and negative track, and the middle arrow of the block $B_2$ belongs to the positive track.
In particular, the middle arrow of the block $B_{4\ell + 1}$ belongs to the positive track if and only if $\lfloor (4\ell + 1)/4\rfloor = \ell$ is even.

Let us inspect the case when $\ell$ is even.
The outer arrow of $B_{4\ell + 1}$ is red since it belongs to the ground and so is the middle arrow since it belongs to the positive track.
But then all the elements in the block $B_{4\ell + 2}$ must be colored blue since they all lie in the range of one of these two arrows and in particular, we found a blue copy of $\alpha$.

It remains to show that for odd $\ell$, we have an admissible coloring.
To see this, let $\pi^R_\ell$ and $\pi^B_\ell$  be the subpermutations of $\pi_\ell$ formed by the red elements and the blue elements, respectively.
We claim that both these permutations avoid~$\alpha$.
Let us look at $\pi^R_\ell$, the case of $\pi^B_\ell$ being analogous.
To check that $\pi^R_\ell$ avoids $\alpha$, we will repeatedly apply the following observation.

\begin{observation}\label{obs-deflate} Suppose that $\gamma$ is a permutation which contains an interval~$I$, and suppose that $I$ has no copy of the  pattern~$\alpha$.
	Let $\gamma^-$ be a permutation obtained from $\gamma$  by `deflating' the interval $I$, i.e., by replacing $I$ by a single element.
	Then $\gamma$ contains $\alpha$ if and only if $\gamma^-$ contains~$\alpha$.
\end{observation}

Our goal is to show that by repeatedly deflating $\alpha$-avoiding intervals, we can 
transform $\pi^R_\ell$ into a permutation from the clockwise spiral.

Consider first the block $B_1$.
The red subset of each inflated point of the copy of $\alpha$ is an interval.
This holds vacuously for all points other than the topmost and the bottommost.
As we already observed, the whole top chunk is red and the bottom chunk is blue or vice versa.
Let us assume that the top chunk is colored red as the other case is symmetric.
Since all the elements in $B_2$ in the range of the two arrows in the top chunk are blue, the top chunk indeed forms an interval in $\pi^R_\ell$.
Thus, the red part of $B_1$ forms in fact a single interval isomorphic to $\alpha^\ra$, which can be deflated to a single point.

Consider now a block $B_i$ for $i \in \{2, \dots, 4\ell + 1\}$.
Notice that it contains at most two arrows colored red.
The elements in the range of any red arrow are colored blue and therefore, each such arrow can be deflated to a single point.
These points form either a monotone pair of the right kind with respect to the clockwise spiral except possibly when $i = 4t+3$ for some $t$ and the arrows were obtained as inflations of the elements `2' and `3' of $231$.
However, since these arrows were consecutive and their tracks contain only blue arrows in both $B_{i-1}$ and $B_{i+1}$, the two red points again form an interval that can be deflated to a single point.

It remains to deal with the last block $B_{4\ell+2}$.
Since $\ell$ is odd, the outer and middle arrow in the block $B_{4\ell+1}$ are colored differently.
If the outer arrow in $B_{4\ell+1}$ is colored blue then there is only a single red point in $B_{4\ell+2}$.
Otherwise, the red points in $B_{4\ell+2}$ are exactly the ones lying in the 
range of the middle arrow in $B_{4\ell+1}$ and thus, they form a copy of $\alpha^\ta$ and they can be 
again safely deflated to a single point.
\end{proofclaim}

\begin{claim}\label{claim:piell-ef}
	Let $k$ be a positive integer.
	For every $n, m \ge 2^{k+1}-2$, we have $\pi_n \sim_k \pi_m$ and thus, 
	$\pi_n$ and $\pi_m$ satisfy the same set of FO sentences of quantifier depth at most~$k$ in \TOTO.
\end{claim}
\begin{proofclaim}
We shall define a strategy for the $k$-move EF game on $\pi_n$ and $\pi_m$ using as a building block the strategy for linear orders.
In particular, let $A = \{0, \dots ,n\}$ and $B = \{0, \dots ,m\}$ be sets of numbers equipped with the natural linear order.
There exists a winning strategy for Duplicator in the $(k+1)$-move EF game on $A$ and $B$ due to Proposition~\ref{pro:ef-linear-orders}.
Let us observe a few of its properties.
If any two elements $a_i$ and $a_j$ picked in $A$ during the first $k$ rounds are successive, i.e., $a_j = a_i + 1$, then necessarily, so are $b_i$ and $b_j$.
Otherwise, Spoiler could win in the last round by selecting $b_{k+1}$ such that $b_i < b_{k+1} < b_j$.
For the same reason, an element $a_i$ for $i \in [k]$ is the minimum element of $A$ if and only if $b_i$ is the minimum element of $B$; the same holds when we replace minimum with maximum.

We are now ready to define the strategy for Duplicator.
At the same time while playing on $\pi_n$ and $\pi_m$, we simulate a virtual $(k+1)$-move EF game on $A$ and $B$.
Suppose Spoiler picks in the $i$-th round an element $p_i$ from the block $B_{c_i}$ in the permutation $\pi_n$ (the other case being symmetric).
We set its $i$-th move in the virtual game as picking the element $a_i = \lfloor \frac{c_i}{4} \rfloor$.
Let $b_i$ be the response of Duplicator in the virtual game and set $d_i = 4 b_i + (c_i \bmod 4)$.
Duplicator picks a point $q_i$ in the permutation $\pi_m$ from the block $B_{d_i}$.
As we discussed, we have $a_i = 1$ if and only if $b_i = 1$ and $a_i = n+1$ if and only if $b_i = m+1$. 
Since moreover $c_i$ and $d_i$ share the same remainder modulo $4$, the blocks $B_{c_i}$ and $B_{d_i}$ are isomorphic as point sets and Duplicator can pick $q_i$ inside $B_{d_i}$ mimicking the choice of $p_i$ in $B_{c_i}$.

We claim that the map $p_i \mapsto q_i$ is an isomorphism between the respective subpermutations of $\pi_n$ and $\pi_m$.
It is sufficient to show that the pair $p_i, p_j$ is isomorphic to $q_i, q_j$ for all choices of $i, j \in [k]$.
That is trivial if $p_i$ and $p_j$ belong to the same block inside $\pi_n$ since then $a_i = a_j$ and thus, also $b_i = b_j$ in the virtual game.
Otherwise if $p_i$ and $p_j$ lie in different non-successive blocks, it is sufficient that $a_i \le a_j$ implies $b_i \le b_j$ and the blocks $B_{c_i}$ and $B_{d_i}$ are isomorphic.
Finally, suppose that $p_i$ and $p_j$ occupy two successive blocks $B_t$ and $B_{t+1}$, respectively.
Then $q_i$ and $q_j$ must also occupy two successive blocks $B_s$ and $B_{s+1}$, and $q_i, q_j$ is isomorphic to $p_i, p_j$ since $B_s\cup B_{s+1}$ is isomorphic to $B_t\cup B_{t+1}$.
\end{proofclaim}

For any positive $k$, Claims~\ref{claim:piell-parity} and~\ref{claim:piell-ef} together imply that the class $\Av(\alpha)\odot \Av(\alpha)$ cannot be defined by an FO sentence of quantifier depth at most $k$ in \TOTO.
To see this, consider the permutations $\pi_{2^{k+1} - 1}$ and $\pi_{2^{k+1}}$.
Only one of them belongs to the class $\Av(\alpha)\odot \Av(\alpha)$ while they are indistinguishable by FO sentences of quantifier depth at most~$k$.
\end{proof}

We remark that this fits well into the bigger picture.
Jelínek, Opler and Valtr~\cite{JOV} proved that deciding whether $\pi$ belongs to the class $\Av(\alpha) \odot \Av(\alpha)$ is NP-complete for any simple pattern~$\alpha$.
On the other hand, the definition of the class $\Av(\alpha) \odot \Av(\alpha)$ by an FO sentence would imply a linear time algorithm by the FO model checking of Bonnet et al.~\cite{Bonnet2020} and thus, it would actually prove $P = NP$.
However, Theorem~\ref{thm:fo-merges} makes this result independent of the assumption $P \neq NP$.

Moreover, Murphy~\cite{Murphy2003} asked in his PhD thesis whether all merges of two finitely based classes are themselves finitely based.
Theorem~\ref{thm:fo-merges} answers this question negatively by showing that there are in fact infinitely many merges of two principal classes that are not even FO-definable.

\subsection{Properties Inexpressible in MSO}

As our next contribution, we will prove that MSO is still not powerful enough to express the property of having a fixed point. Instead of using an MSO variant of the EF game, 
we will use the Büchi--Elgot--Trakhtenbrot theorem connecting regular languages with MSO logic.

Let us start with formal definitions of words and languages.
We let $\Sigma^*$ denote the set of all words over an alphabet~$\Sigma$.
A \emph{language} is a subset of~$\Sigma^*$.
Finally, a language $L$ is \emph{regular} if it is the language accepted by a finite automaton.

There is a standard way of defining words as models of a logic theory.
For an alphabet $\Sigma$, the signature $\cS_\Sigma$ consists of one binary relation symbol~$<$ and a unary
relation symbol $P_a$ for every $a \in \Sigma$.
The symbol $<$ is intended to describe the linear order of positions in a word, while the symbols $P_a$ form
a partition of its domain and $P_a$ describes the positions occupied by the letter~$a$.
These conditions are easily described by FO sentences, and therefore form a logic theory that we call
the \emph{Theory of Words over $\Sigma$ ($\TOW_\Sigma$)}.

It turns out that the languages definable by MSO sentences in $\TOW_\Sigma$ are exactly the regular languages.
This is known as the Büchi--Elgot--Trakhtenbrot theorem.

\begin{theorem}[\cite{Buchi1960,Elgot1961,Trakhtenbrot1962}]
  \label{thm:mso-regular}
  A language $L \subseteq \Sigma^*$ is regular if and only if it is definable by an MSO sentence in $\TOW_\Sigma$. 
\end{theorem}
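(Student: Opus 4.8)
The plan is to establish the two implications separately. For the direction ``regular $\Rightarrow$ MSO-definable'', I would fix a deterministic finite automaton $\cA$ with state set $Q = \{q_1, \dots, q_m\}$, initial state $q_0$, set of accepting states $F$, and transition function $\delta$, and build an MSO sentence over $\TOW_\Sigma$ of the form $\exists X_{q_1} \cdots \exists X_{q_m}\, \Phi$, where $X_q$ is intended to collect exactly the positions at which the run of $\cA$ has just entered state $q$. The body $\Phi$ is a conjunction of FO constraints: the sets $X_q$ partition the domain; the $<$-minimal position, if it carries letter $a$, lies in $X_{\delta(q_0,a)}$; whenever $p'$ is the $<$-successor of $p$, $p \in X_q$ and $p'$ carries $a$, then $p' \in X_{\delta(q,a)}$; and the $<$-maximal position lies in $X_q$ for some $q \in F$. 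Successor, minimum and maximum are FO-definable from $<$, so $\Phi$ is a genuine FO (hence MSO) formula, and the empty word is handled by a separate disjunct $\neg \exists x\,(x = x)$ guarded by the condition $q_0 \in F$.

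For the converse, ``MSO-definable $\Rightarrow$ regular'', the difficulty is that subformulas carry free variables, so I would prove a stronger statement by induction on the structure of the formula. Given an MSO formula $\varphi$ with free first-order variables $x_1, \dots, x_r$ and free set variables $X_1, \dots, X_s$, one encodes a word $w \in \Sigma^*$ together with an assignment of these variables as a word over the extended alphabet $\Sigma \times \{0,1\}^{r+s}$, where the $i$-th bit-track records the value of $x_i$ (for $i \le r$) or membership in $X_{i-r}$ (for $i > r$), and one only admits encodings in which each of the first $r$ tracks is a singleton. The claim to be proved is that the set of such encodings satisfying $\varphi$ is a regular language over the extended alphabet; specialising to a sentence ($r = s = 0$) then yields the theorem. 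The base cases $x_i = x_j$, $x_i < x_j$, $P_a(x_i)$ and $x_i \in X_j$ are explicit and evidently regular once the singleton constraint on the FO-tracks is built in; Boolean connectives follow from closure of regular languages under union, intersection and complement, where for negation one complements inside the regular set of well-formed encodings rather than inside all of $(\Sigma \times \{0,1\}^{r+s})^*$; and existential quantification $\exists x_i\, \varphi$ or $\exists X_j\, \varphi$ amounts to deleting the relevant track, i.e.\ applying a letter-to-letter homomorphism, under which regular languages are closed by nondeterministic guessing (universal quantification reduces to this via negation).

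The step I expect to be the main obstacle is the complementation used to interpret negation: handled via the subset construction it requires determinising the automaton built so far, and it is iterated negation that is responsible for the non-elementary blow-up in the size of the final automaton. A secondary delicate point is maintaining the invariant that the first $r$ tracks always encode genuine singletons, which is exactly why that condition has to be part of the statement carried through the induction rather than imposed only at the very end. The remaining ingredients --- the atomic automata and the elementary closure properties of regular languages --- are routine once the extended-alphabet bookkeeping has been set up.
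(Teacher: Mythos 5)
The paper does not prove this statement at all: it is the classical B\"uchi--Elgot--Trakhtenbrot theorem, imported by citation and used as a black box in the proof of Proposition~\ref{pro:mso-fixed-point}, so there is no in-paper argument to compare against. Your sketch is the standard proof from the cited literature and is correct in outline: for ``regular $\Rightarrow$ MSO'' you encode the run of a DFA by existentially quantified sets $X_q$ with partition, initial, successor and acceptance constraints (plus the separate empty-word disjunct), and for ``MSO $\Rightarrow$ regular'' you strengthen the statement to formulas with free variables, encode valuations as extra $\{0,1\}$-tracks over an extended alphabet with the singleton invariant on first-order tracks, and induct using closure of regular languages under union, complementation relative to the well-formed encodings, and projection for the quantifiers. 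You also correctly identify the two delicate points (complementation/determinisation causing the non-elementary blow-up, and carrying the singleton invariant through the induction), so nothing essential is missing.
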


With these tools, we can prove that the property of having a fixed point is inexpressible by an MSO sentence in \TOTO.
More precisely, we show that existence of such sentence would allow us to define a non-regular language by an MSO sentence in $\TOW_\Sigma$.
Note that the proof closely follows a similar argument by Albert et al.~\cite[Proposition 30]{Albert2020}.
\begin{proposition}
  \label{pro:mso-fixed-point}
  The property of having a fixed point is not expressible by an MSO sentence in \TOTO.
\end{proposition}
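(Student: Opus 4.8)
The plan is to argue by contradiction and reduce to the Büchi--Elgot--Trakhtenbrot theorem (Theorem~\ref{thm:mso-regular}). Suppose there is an MSO sentence $\varphi$ over $\cS_\TO$ such that $\pi \models \varphi$ if and only if $\pi$ has a fixed point. Fix the alphabet $\Sigma = \{a,b\}$. I will transform $\varphi$ into an MSO sentence over $\TOW_\Sigma$ that defines a non-regular language, which is the desired contradiction.

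The first ingredient is a cheap interpretation of permutations inside words. Given a word $u$ over $\Sigma$ with domain $\{1,\dots,|u|\}$, let $\cM_u$ be the $\cS_\TO$-structure on the same domain in which $x <_1 y$ is interpreted as $x < y$ and $x <_2 y$ is interpreted by the quantifier-free formula
\[
\varepsilon_2(x,y) \;=\; \bigl(P_b(x)\wedge P_b(y)\wedge x<y\bigr)\,\vee\,\bigl(P_a(x)\wedge P_a(y)\wedge y<x\bigr)\,\vee\,\bigl(P_b(x)\wedge P_a(y)\bigr).
\]
One checks routinely that $\varepsilon_2$ always defines a linear order (informally: the $b$-positions come first in their natural order, followed by the $a$-positions in reverse order), so $\cM_u$ is a model of \TOTO, i.e. a permutation. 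Let $\widehat\varphi$ be the MSO sentence over $\TOW_\Sigma$ obtained from $\varphi$ by replacing every atom $x <_1 y$ with $x < y$ and every atom $x <_2 y$ with $\varepsilon_2(x,y)$. A straightforward structural induction shows $u \models \widehat\varphi$ if and only if $\cM_u \models \varphi$, i.e. if and only if the permutation $\cM_u$ has a fixed point; hence, by Theorem~\ref{thm:mso-regular}, the language $K = \{u \in \Sigma^* : u \models \widehat\varphi\}$ is regular.

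It remains to pick, among the permutations $\cM_u$, a subfamily with non-regular fixed-point behaviour. For $u = a^m b^n$ with $m \ge 1$ and $n \ge 0$, a direct computation shows that $\cM_u$ is the skew sum $\delta_m \ominus \iota_n$ of the decreasing permutation $m,m{-}1,\dots,1$ with the increasing permutation $1,2,\dots,n$: the point with $x$-coordinate $i \le m$ has value $m+n+1-i$, and the point with $x$-coordinate $m+j$ ($1 \le j \le n$) has value $j$. Therefore $\cM_{a^m b^n}$ has a fixed point if and only if $(m+n+1)/2$ is an integer lying in $\{1,\dots,m\}$, that is, if and only if $m+n$ is odd and $n<m$. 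Consequently $K \cap a^+ b^*$ equals $L := \{a^m b^n : m \ge 1,\ n \ge 0,\ m+n \text{ odd},\ n<m\}$. This language $L$ is not regular: applying the pumping lemma to $a^{2p+1}b^{2p} \in L$ and pumping the $a$-block down yields a word $a^{m'}b^{2p}$ with $m' \le 2p$, which violates the constraint $n<m'$. Since $K$ and $a^+ b^*$ are regular, so is their intersection $L$, a contradiction, which finishes the proof.

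The heart of the argument is the choice of the family $\cM_{a^m b^n}$: it must be simple enough that its two linear orders can be read off a word by a fixed quantifier-free formula, yet its fixed points must be governed by a non-regular condition on the parameters. Plain monotone blocks and their direct or skew sums typically yield only parity-type conditions, which are regular; the construction above works precisely because the existence of a fixed point compares the length of the decreasing part against the length of the increasing part. Once this family is in hand, the remaining steps --- that $\varepsilon_2$ defines a linear order, that the syntactic substitution preserves satisfaction, and that $L$ is not regular --- are all routine, so I expect the only real design choice in the proof to be isolating the right family of permutations.
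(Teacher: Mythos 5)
Your proof is correct and follows essentially the same route as the paper: assume an MSO sentence defines having a fixed point, rewrite the atoms $x<_1 y$ and $x<_2 y$ to interpret a family of permutations inside words over $\{a,b\}$, and invoke the Büchi--Elgot--Trakhtenbrot theorem to contradict regularity. The only differences are cosmetic --- the paper uses words $a^k b\, a^\ell$ encoding $(\oplus^k 1)\ominus 1\ominus(\oplus^\ell 1)$ (fixed point iff $k=\ell$, giving the non-regular language $\{a^n b\, a^n\}$) with the position of the unique $b$ as a parameter, whereas you use $a^m b^n$ encoding $\delta_m\ominus\iota_n$ with a parameter-free interpretation and an intersection with $a^+b^*$.
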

\begin{proof}
Let us assume for contradiction that there exists an MSO sentence $\varphi$ in \TOTO{} expressing the property that a permutation has a fixed point.
We show how to transform~$\varphi$ into an MSO sentence $\rho$ in $\TOW_\Sigma$ for the two-letter
alphabet $\Sigma = \{a, b\}$ such that $\rho$ defines the language $L = \{ a^n\, b\, a^n \mid n \in
\mathbb{N} \}$. This proves the intended claim since $L$ is clearly non-regular by a standard application
of the pumping lemma.

The main ideas is that the evaluation of $\rho$ on a word of the form $ w = a^k \, b\, a^\ell$ simulates the evaluation of $\varphi$ on the permutation $\pi_{k,\ell} = (\oplus^k 1) \ominus 1 \ominus (\oplus^\ell 1)$.
It is easy to see that $\pi_{k,\ell}$ has a fixed point if and only if $k = \ell$.

In order to transform $\varphi$ into an MSO sentence in $\TOW_\Sigma$, we need to replace all atomic formulas of type $x <_1 y$ and $x <_2 z$.
Let us assume that the variable $x_b$ is set to the position of the only letter $b$ in the word $w$.
We replace $x <_1 y$ simply by $x < y$, effectively mapping the word domain to the permutation from left to right.
The situation is more complicated with $<_2$ since we need to decide the truth value based on the positions relative to $x_b$.
We replace $x <_2 y$ by
\begin{multline*}
	\left[x < y \land ((x < x_b \land y < x_b) \lor (x_b < x \land x_b < y)) \right] \, \lor \\
	\left[ y < x \land ((y < x_b \land x_b < x) \lor (y < x_b \land x = x_b) \lor (y = x_b \land x_b < x ) ) \right].
\end{multline*}
The first line takes care of the case when $x < y$, since then both $x$ and $y$ must either lie before or after the letter $b$.
The second line concerns the case when $y < x$, in which case either $x$ and $y$ are separated by the letter $b$ or at most one of them is equal to it.
Let $\rho'(x_b)$ be the MSO formula in $\TOW_\Sigma$ with the free variable $x_b$ obtained by these replacements.
We need to additionally check that $w$ contains exactly one occurrence of the letter $b$ and assign its position to the variable~$x_b$.
Putting it all together, we get
\[\rho = \exists x_b\, (P_b(x_b) \land \forall x\, (P_b(x) \rightarrow x = x_b) \land \rho'(x_b) ).\qedhere\]%
\end{proof}

\section{MSO Model Checking}
\label{sec:model-checking}

We shift our attention to the complexity of deciding MSO formulas of \TOTO.
On one hand, we show that there exists an FPT algorithm for this problem parameterized by the tree-width and the length of the formula.
On the other hand, we complement this with a negative result showing that checking MSO sentences on permutations from any class with the so-called poly-time computable long path property is as hard as checking MSO sentences on graphs.


\subsection{Algorithm Parameterized by Tree-width}
\label{ssec:algorithm}


In order to show the tractability of MSO model checking, we prove that every permutation can be interpreted by MSO predicates from its incidence graph equipped with suitable labels.
It is then straightforward to apply standard tools for MSO model checking on graphs and obtain an FPT algorithm parameterized by tree-width.

\begin{theorem}\label{thm:mso-checking-tw}
	Given a permutation $\pi$ of length $n$ and an MSO sentence $\varphi$ in \TOTO, we can decide $\pi \models \varphi$  in time $f(|\varphi|, \tw(\pi)) \cdot n$ for some computable function $f$.
\end{theorem}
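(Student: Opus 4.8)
The plan is to reduce MSO model checking on permutations (in the \TOTO{} signature) to MSO model checking on labeled graphs of bounded tree-width, for which Courcelle's theorem already gives an FPT algorithm. The bridge is an MSO \emph{interpretation}: I want to show that, from the incidence graph $G_\pi$ augmented with a constant number of unary labels encoding local structure (e.g.\ which of the two ``path'' edges at a vertex goes left/right and up/down, the two endpoints of the $x$-order path, the two endpoints of the $y$-order path), one can reconstruct the binary relations $\prec^\pi_1$ and $\prec^\pi_2$ by MSO formulas $\xi_1(x,y)$ and $\xi_2(x,y)$. Once such formulas exist, every MSO sentence $\varphi$ in \TOTO{} can be mechanically rewritten into an MSO sentence $\widehat{\varphi}$ over the labeled-graph signature by replacing each atom $u <_1 v$ with $\xi_1(u,v)$ and each atom $u <_2 v$ with $\xi_2(u,v)$ (set quantifiers and membership atoms are untouched), so that $\pi \models \varphi \iff (G_\pi, \text{labels}) \models \widehat{\varphi}$.

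The key steps, in order, are: (1) Define the labeling of $G_\pi$ precisely and observe it is computable in linear time from $\pi$. The natural labels mark, for each vertex, its left/right neighbor along the horizontal Hamiltonian path and its below/above neighbor along the vertical one, plus four constants for the extreme vertices; this is a bounded number of unary predicates, so $(G_\pi,\text{labels})$ still has tree-width $\tw(\pi)+O(1)$. (2) Write the MSO formula $\xi_1(x,y)$ expressing $x \prec^\pi_1 y$: $x$ precedes $y$ in the horizontal order iff every vertex set $Z$ that contains $x$ and is closed under the ``right-neighbor'' step also contains $y$ — i.e.\ reachability along the directed horizontal path, which is MSO-expressible since the horizontal path is a definable subgraph. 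Symmetrically define $\xi_2(x,y)$ using the vertical path. (3) Carry out the syntactic substitution to get $\widehat\varphi$, check $|\widehat\varphi| = O(|\varphi|)$ and verify the equivalence $\pi \models \varphi \iff (G_\pi,\text{labels}) \models \widehat\varphi$ by a routine induction on the structure of $\varphi$. (4) Invoke Courcelle's theorem on labeled graphs: MSO model checking is solvable in time $g(|\widehat\varphi|, \tw) \cdot |V|$, and since $\tw(G_\pi,\text{labels}) = \tw(\pi)+O(1)$, $|V| = n$, and $|\widehat\varphi| = O(|\varphi|)$, this gives the claimed bound $f(|\varphi|,\tw(\pi)) \cdot n$ (computing a tree decomposition of near-optimal width within the same running time, e.g.\ via Bodlaender's algorithm, is folklore).

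The main obstacle — really the only place requiring care — is step (2): confirming that the linear orders $\prec^\pi_1,\prec^\pi_2$ are genuinely MSO-definable from the labeled incidence graph. Reachability along a path is MSO-definable, so the substance is just making sure the horizontal and vertical Hamiltonian paths themselves are pinned down by the labels (this is why we label the directed successor edges rather than leaving $G_\pi$ unoriented: an undirected union of two paths does not in general let MSO recover which path is which or in which direction it runs). With the directed labels in place, $\xi_1$ and $\xi_2$ are straightforward, and the rest is bookkeeping. One should also note — as the paragraph preceding the theorem already flags — that this is precisely an ``MSO-interpretation'' argument, so after establishing the interpretation the FPT conclusion is immediate from the classical graph result; I would state the interpretation as the technical core (perhaps as a lemma) and keep the reduction to Courcelle's theorem brief.
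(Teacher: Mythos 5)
Your plan is essentially the paper's proof: decorate the incidence graph $G_\pi$ with a constant number of labels computable in linear time, give MSO formulas interpreting $<_1$ and $<_2$ in the labeled graph, rewrite $\varphi$ syntactically into a sentence $\widehat\varphi$ of length $O(|\varphi|)$ with $\pi \models \varphi \iff G_\pi \models \widehat\varphi$, and finish with Courcelle's theorem on graphs of tree-width $\tw(\pi)+O(1)$.

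The one place where your write-up would fail as literally stated is the claim that a bounded number of \emph{unary vertex} predicates suffices, combined with defining $x <_1 y$ as closure of vertex sets under the ``right-neighbor step.'' Unary labels on vertices cannot, in general, tell you which of the (up to four) neighbors of a vertex is its horizontal successor, so the ``right-neighbor'' relation you close under is not available; and if you only label the \emph{undirected} edges as horizontal or vertical (which is what the paper does, with edge labels $succ_1,succ_2$), then ``closed under the $succ_1$-step'' is symmetric and cannot distinguish $x <_1 y$ from $y <_1 x$. You do flag the need for orientation (``label the directed successor edges''), and that variant is sound, but then the structure is no longer a plain unary-labeled graph and you must invoke Courcelle's theorem in its form for relational structures (or directed edge-labeled graphs) of bounded tree-width. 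The paper instead stays with undirected edge labels $succ_1,succ_2$ plus vertex labels $min_1,min_2$ and recovers the orientation in MSO\textsubscript{2}: it defines, for each $x$, the set $P_x$ of $succ_\alpha$-edges on the path from the $min_\alpha$-vertex to $x$ (characterized by degree parity inside the edge set), and sets $x <_\alpha y$ iff $P_x \subsetneq P_y$, after which it applies Courcelle's MSO\textsubscript{2} model checking for vertex- and edge-labeled graphs. So: same architecture, but replace ``unary labels plus neighbor-step closure'' either by the paper's edge-set comparison trick or by an explicitly directed successor structure with the corresponding version of Courcelle's theorem; the remaining steps (linear-time construction of the labeled graph and of the orders via bucket sort, the $O(|\varphi|)$ rewriting, and the final $f(|\varphi|,\tw(\pi))\cdot n$ bound) are exactly as in the paper.
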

\begin{proof}
We first show how a permutation as a model in \TOTO can be interpreted from its labeled incidence graph and only afterwards, we describe the ensuing algorithm for MSO model checking. 

We decorate the incidence graph $G_\pi$ with two types of edge labels, $succ_1$ and $succ_2$, and two types of vertex labels, $min_1$ and $min_2$.
The vertex corresponding to the leftmost point in $\pi$ (the minimum element in the $\prec_1$ order) gets label $min_1$ while the one corresponding to the bottommost point in $\pi$  (the minimal element in the $\prec_2$ order) gets label $min_2$.
Note that a single vertex can receive both labels which happens if and only if $\pi = 1 \oplus \sigma$ for some permutation $\sigma$.
The edges connecting successors in the $\prec_1$ order gets label $succ_1$ while the edges connecting successors in the $\prec_2$ order receive label $succ_2$.
Again, a single edge can receive both colors which happens if and only if $\pi$ is $\oplus$-decomposable, i.e., $\pi = \sigma_1 \oplus \sigma_2$ for some non-empty permutations $\sigma_1$ and $\sigma_2$.
In the rest of the proof, we use $G_\pi$ to denote the incidence graph together with these labels.

Let us now formalize the logic of vertex- and edge-labeled graphs.
Technically, it is rather a logic of the associated incidence structure\footnote{Incidence here refers to the incidence between between vertices and edges in arbitrary graph not between points in permutations.}.
Formally, the signature consists of two unary relations $edge$ and $vertex$, a binary relation $Inc$ and finally one unary relation $lab$ for every (edge or vertex) label $lab$.
The relations $edge$ and $vertex$ form a partition of the domain and $Inc$ contains only pairs of the form $(e,v)$ where $e$ is an edge and $v$ is vertex.
Moreover, every edge~$e$ is contained in exactly two pairs $(e,v)$ and $(e,w)$ inside $Inc$.
Clearly all these conditions are FO definable and let us call the corresponding logic theory \emph{Theory of Labeled Graphs (\TOLG)} 

Let us note that this theory and the ensuing MSO logic is usually denoted MSO\textsubscript{2} or MS\textsubscript{2} in the literature (see~\cite{Courcelle2012}). However, we choose  to use this non-standard yet clearly equivalent formalization for consistency with the rest of the paper.

Finally, the interpretation of $\pi$ from its labeled incidence graph follows from the following observation.
For simplicity, we state it only for the order $<_1$ even though clearly the same holds for $<_2$ when we replace the leftmost with the bottommost vertex and $succ_1$ with $succ_2$. 

\begin{observation}\label{obs:mso-encoding-graph}
For any element $x$ of $\pi$, let $P_x$ be the set of edges contained in the unique path in $G_\pi$ between the leftmost vertex (labeled $min_1$) and $x$ that uses only edges with label $succ_1$.
Then for two elements $x$ and $y$, we have $x <_1 y$  if and only if $P_x \subseteq P_y$. 
\end{observation}

It remains to show how to translate Observation~\ref{obs:mso-encoding-graph} into MSO predicates interpreting the orders $<_1$ and $<_2$ from $G_\pi$.
Let us denote by $m_1$ and $m_2$ the unique vertices labeled by $min_1$ and $min_2$ respectively.

Let us consider the order $<_1$ as the other one is symmetrical.
We will handle separately the degenerate case when $x$ equals $m_1$.
Otherwise, let $E$ be a set of edges with labels $succ_1$ such that $m_1$ and $x$ the only vertices incident to precisely one edge from $E$.
Since all edges labeled $succ_1$ induce a path in~$G_\pi$, their arbitrary subset necessarily induces a disjoint set of paths.
However, we require that $m_1$ and $x$ are the only vertices incident to exactly one edge from $E$ and thus, $E$ must in fact be equal to $P_x$.
It is easy to transform this idea into an MSO predicate capturing the relation between a point $x$ and a set of edges on path to the minimal point with respect to the order $<_\alpha$ for $\alpha \in \{1,2\}$ as
\begin{multline*}
path_\alpha(x, E) =
\forall e \;(e \in E \rightarrow succ_\alpha(e))\; \land\\\
\bigl((E = \emptyset \, \land \, min_\alpha(x)) \,\lor\, \forall v\; (\text{`$deg_E(v) = 1$'} \leftrightarrow (v=x \,\lor\, min_\alpha(v))) \bigr).
\end{multline*}
where `$deg_E(v) = 1$' expresses that $v$ is incident to exactly one edge from~$E$ and can be expressed in MSO as
\[\exists f \left( f \in E \land Inc(f,v) \,\land\, \forall g \, \left(\left(g \in E \land Inc(g,v)\right) \rightarrow g=f \right)\right).\]

The final MSO sentence is obtained by replacing every occurrence of $x <_\alpha y$ for $\alpha \in \{1,2\}$ in~$\varphi$ with
\[ \exists E \, \exists F \; \left(path_\alpha(x, E) \land path_\alpha(y, F) \land E \subsetneq F\right).\]
%

The correctness follows straightforwardly from Observation~\ref{obs:mso-encoding-graph} and we get that $\pi \models \varphi$ if and only if $G_\pi \models \rho$.

Finally, let us describe the actual algorithm for MSO model checking.
The algorithm starts by constructing the labeled incidence graph~$G_\pi$ in $O(n)$ time.
This is possible trivially if we have access to both linear orders $\prec_1$ and $\prec_2$ of~$\pi$.
When we receive $\pi$ in the standard one-line notation, i.e. as a sequence of unique values of~$[n]$, we can obtain the order $\prec_2$ in linear time by sorting the input sequence using the standard bucket sort algorithm~\cite{Cormen2009}.
Afterwards, the algorithm syntactically rewrites $\varphi$ to $\rho$ in $O(|\varphi|)$ time.
Finally, it invokes the standard Courcelle's MSO\textsubscript{2} model checking algorithm on vertex- and edge-labeled graphs of bounded tree-width~\cite[Theorem 6.4]{Courcelle2012} which runs in time $f(|\rho|,\tw(G_\pi)) \cdot n$.
\end{proof}

\subsection{Hardness for Classes with the Long Path Property}


Before we state our hardness result, we need to briefly introduce the concept of monotone grid classes and the (poly-time computable) long path property.

\paragraph*{Monotone grid classes.}
A \emph{monotone gridding matrix of size $k\times\ell$} is a matrix $\cM$ with $k$ columns and $\ell$ rows, whose every entry is one of the three permutation classes $\emptyset$, $\Av(21)$ or $\Av(12)$.
Let $\pi$ be a permutation of length~$n$.
A \emph{$(k\times\ell)$-gridding} of $\pi$ is a pair of non-decreasing sequences $1=c_1\le c_2\le\dotsb\le c_{k+1}=n+1$ and $1=r_1\le r_2\le\dotsb\le r_{\ell+1}=n+1$.
For $i\in[k]$ and $j\in[\ell]$, the \emph{$(i,j)$-cell} of the gridding of $\pi$ is the set of points $p\in S_\pi$ satisfying $c_i\le p.x<c_{i+1}$ and $r_j\le p.y< r_{j+1}$.
A permutation $\pi$ together with a gridding $(c,r)$ forms a \emph{gridded permutation}.
Let $\cM$ be a monotone gridding matrix of size $k\times\ell$.
We say that the gridding of $\pi$ is an \emph{$\cM$-gridding} if for every $i\in[k]$ and $j\in [\ell]$, the subpermutation of $\pi$ induced by the points in the $(i,j)$-cell of the gridding of $\pi$ belongs to the class $\cM_{i,j}$.

We let $\Grid(\cM)$ denote the set of permutations that admit an $\cM$-gridding.
This is clearly a permutation class.
Many important properties of the grid class $\Grid(\cM)$ can be characterized using properties of a certain graph associated to $\cM$.
The \emph{cell graph} of a gridding matrix $\cM$, denoted by $G_\cM$, is the graph whose vertices are all the non-empty entries of $\cM$ and two vertices are adjacent if they appear in the same row or the same column of $\cM$, and there is no other non-empty entry between them.
See Figure~\ref{fig:grid-class}.

\begin{figure}
  \centering
  \raisebox{-0.5\height}{\includegraphics[width=0.45\textwidth]{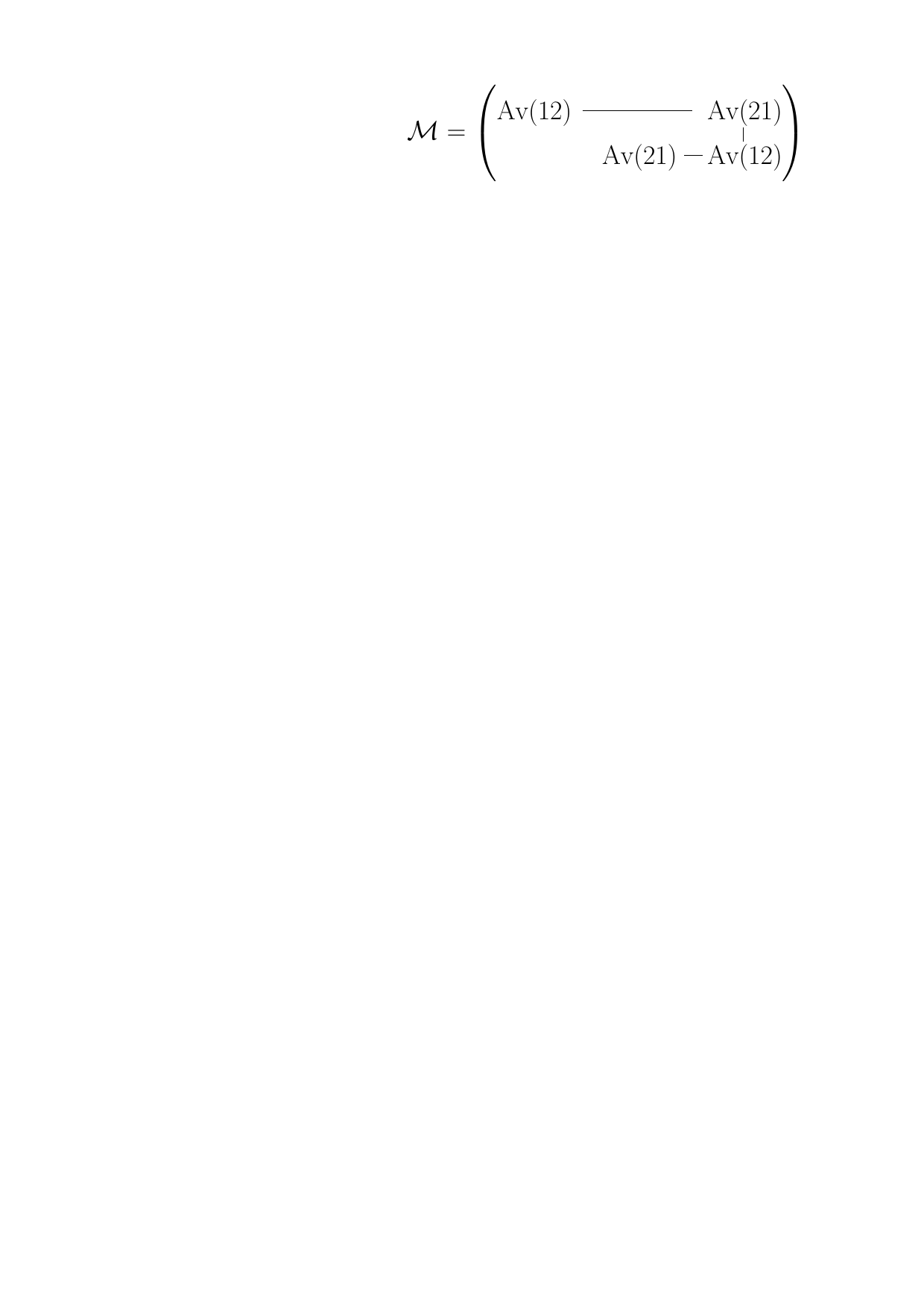}}
  \hspace{0.5in}
  \raisebox{-0.5\height}{\includegraphics[scale=0.5]{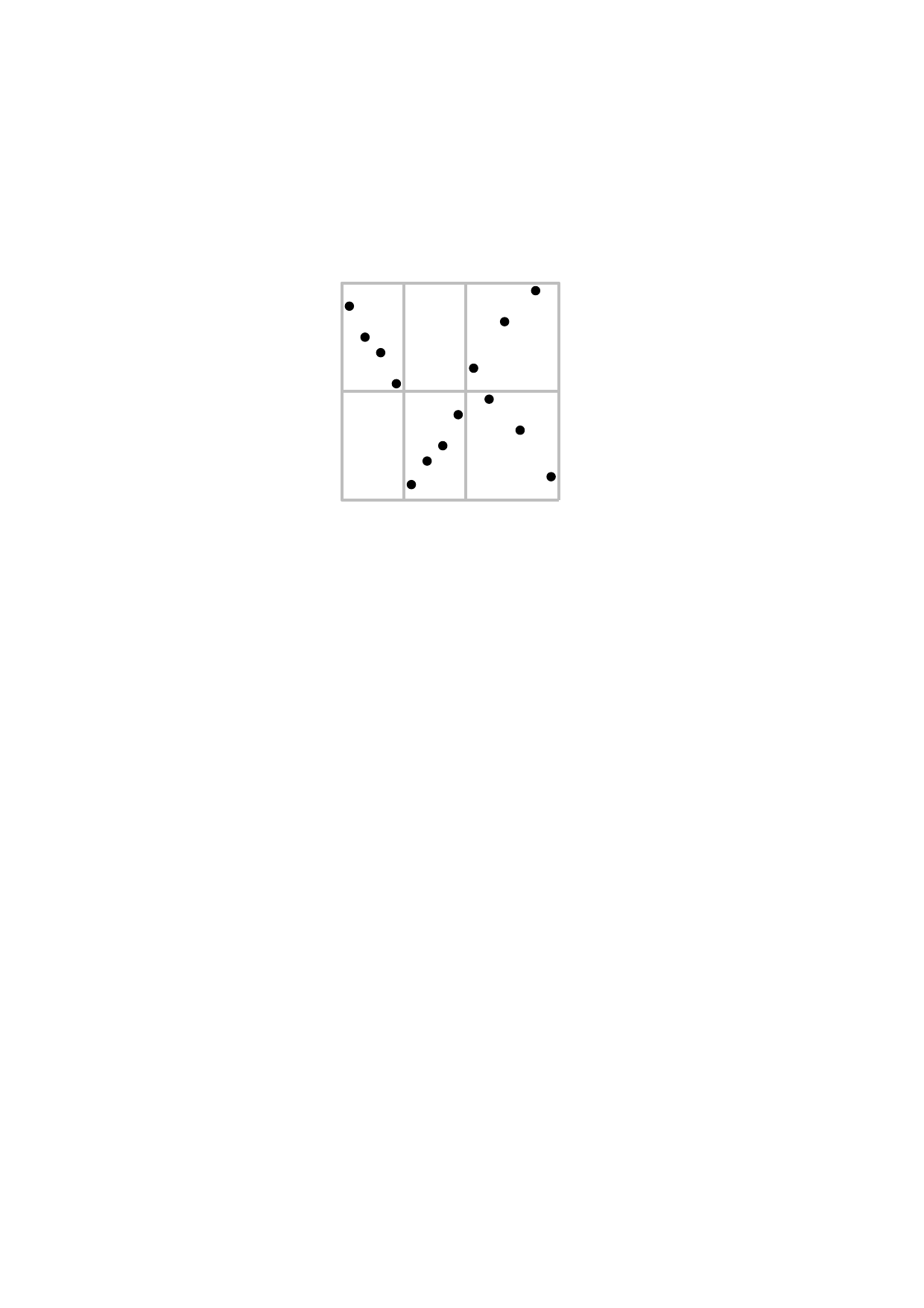}}
  \caption{A monotone gridding matrix $\cM$ on the left and a permutation equipped with an $\cM$-gridding on the right. Empty 
    entries of $\cM$ are omitted and the edges of $G_\cM$ are drawn in $\cM$.}
  \label{fig:grid-class}
\end{figure}

\paragraph*{Long path property.}
We say that a permutation class $\cC$ has the \emph{long path property (LPP)} if for every $k$ the class $\cC$ contains a monotone grid subclass whose cell graph is a path of length~$k$ such that no three consecutive cells share the same row or column.
Moreover, $\cC$ has the \emph{poly-time computable long path property} if there exists an algorithm that given a positive integer $k$ outputs in time polynomial in $k$ a monotone gridding matrix $\cM$ such that $\Grid(\cM) \subseteq \cC$ and the cell graph of $\cM$ is such a path of length~$k$.

There exists a strong connection between the LPP and tree-width.
Jelínek et al.~\cite{Jelinek2021b} showed that any class with the LPP has unbounded tree-width.
In fact, no examples of classes without the LPP having unbounded tree-width are known and we conjecture that these two properties are equivalent.
Moreover, the LPP is not very restrictive as, e.g., the class $\Av(\sigma)$ has the poly-time computable LPP for any $\sigma$ not symmetric to any of $1, 12$ or $132$ (see~\cite{Jelinek2021b}).

\smallskip
We complement Theorem~\ref{thm:mso-checking-tw} by showing that MSO model checking on permutations is 
 as hard as MSO model checking on general graphs, even when we restrict the permutation $\pi$ to a fixed class with the poly-time computable LPP.
Consequently, we show that there exists an explicit MSO sentence~$\varphi$ in \TOTO such that deciding whether a given permutation satisfies~$\varphi$ is NP-hard in any class with the poly-time computable LPP.
We stress that~$\varphi$ is completely independent of the particular class.
If it holds that the LPP is actually equivalent to having unbounded tree-width,  Theorems~\ref{thm:mso-checking-tw} and~\ref{thm:mso-checking-np-hard} form a nice dichotomy (up to the computability assumption) for hardness of MSO model checking inside a fixed permutation class.

\begin{theorem}
	\label{thm:mso-checking-np-hard}
	There exists an MSO sentence~$\psi$ in \TOTO such that deciding whether a permutation $\pi$ satisfies~$\psi$ is NP-hard even when the inputs are restricted to an arbitrary permutation class with the poly-time computable long path property.
\end{theorem}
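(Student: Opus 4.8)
The plan is to reduce an NP-hard graph problem --- concretely, $3$-colorability, which is expressible by a fixed MSO sentence $\chi$ on graphs --- to the problem of evaluating a single fixed MSO sentence $\psi$ in \TOTO{} on permutations from a class $\cC$ with the poly-time computable LPP. The central idea is that a long monotone grid path gives us enough ``wiring space'' inside the permutation to encode an arbitrary graph on, say, $N$ vertices, together with its edges, as a permutation $\pi_G$ lying in $\Grid(\cM)$ for the path gridding matrix $\cM$ of length polynomial in $N$ that the LPP algorithm outputs. Each cell of the path is either an increasing or a decreasing monotone sequence, and we will use these cells as ``tracks'' carrying blocks of points; by routing information between consecutive cells of the path (much as the arrow tracks were used in the proof of Theorem~\ref{thm:fo-merges}) we can realize the incidence structure of $G$. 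Then we design $\psi$ so that $\pi_G \models \psi$ if and only if $G$ is $3$-colorable; since $\psi$ must be independent of $\cC$, we must make the decoding of $G$ from $\pi_G$ (including how many vertices there are and which pairs are edges) entirely MSO-definable in \TOTO{} from the permutation alone, using only the structural regularity of the construction.

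First I would fix the graph encoding. Given $G$ on vertex set $[N]$, I would call the LPP algorithm with parameter $k$ polynomial in $N$ (large enough to accommodate one ``vertex gadget'' per vertex of $G$ and one ``edge gadget'' per potential edge, chained along the path), obtaining $\cM$ and a witnessing $\Grid(\cM)\subseteq\cC$. I would then build $\pi_G\in\Grid(\cM)$ by placing, in designated cells along the path, small constant-size patterns that mark the boundaries of gadgets, that encode vertex identifiers in unary by block lengths, and that are present or absent according to whether the corresponding edge of $G$ is present. The key design constraint is that all of this bookkeeping --- ``this block is a vertex marker,'' ``these two vertex markers are joined by an edge marker,'' ``two vertices are equal because their unary codes have the same length'' --- must be recoverable by an MSO\textsubscript{2}-style formula translated into \TOTO{} via the machinery we already have (Observation~\ref{obs:mso-encoding-graph} gives us successor and betweenness along each axis, and from these MSO can define reachability along monotone runs, block boundaries, and length comparisons using set variables). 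Using Proposition~\ref{pro:mso-merges}-style relativization, $\psi$ would then assert: ``there is a partition of the vertex-marker blocks into three color classes $R,G,B$ such that no edge-marker joins two blocks of the same class,'' which is exactly $3$-colorability of $G$.

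The main obstacle --- and the part requiring the most care --- is making the graph $G$ MSO-interpretable in \TOTO{} from $\pi_G$ by a formula $\psi$ that does not depend on $\cC$. The difficulty is twofold: (i) the gridding matrix $\cM$ and hence the precise geometric layout of $\pi_G$ is chosen adversarially by the (unknown) LPP algorithm, subject only to the ``path, no three consecutive collinear cells'' shape, so $\psi$ cannot hard-code cell coordinates and must instead navigate the path purely by the local increasing/decreasing structure; and (ii) we must verify that the gadget-marking patterns are \emph{robust}, i.e.\ that they cannot be accidentally created by the monotone filler content of other cells, so that the MSO decoding is unambiguous. I would handle (i) by noting that a monotone grid path of the stated form induces, on the incidence graph $G_{\pi_G}$, a ``caterpillar-like'' structure that MSO can traverse cell by cell (detecting a cell change as a maximal run that is monotone in one sense followed by a switch), and (ii) by choosing the markers to be short non-monotone patterns (e.g.\ built from a fixed simple permutation) that provably cannot occur inside an increasing or decreasing sequence and that are separated by guard regions. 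Once $G$ is interpreted, the reduction is a routine MSO transduction argument and the NP-hardness of $3$-colorability finishes the proof; correctness of the reduction reduces to checking that $\pi_G\in\cC$ (immediate from $\Grid(\cM)\subseteq\cC$) and that $\pi_G$ has polynomial size and is polynomial-time constructible (immediate from the poly-time LPP and the constant-size gadgets).
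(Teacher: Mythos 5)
Your high-level strategy coincides with the paper's: fix the MSO sentence for 3-colorability on graphs, use the poly-time LPP to obtain a long monotone grid path, build inside it a permutation encoding the input graph, and translate the graph sentence into a single \TOTO{} sentence $\psi$ independent of $\cC$ (this is exactly Proposition~\ref{pro:mso-checking-hard}). However, two of the concrete mechanisms you propose for the interpretation step do not work. First, you want vertex identifiers ``in unary by block lengths,'' with equality of vertices recovered by MSO as equality of code lengths. MSO in \TOTO{} cannot compare cardinalities of two monotone blocks that are separated in the orders: this is essentially the content of Proposition~\ref{pro:mso-fixed-point}, whose proof shows that recognizing $k=\ell$ in $(\oplus^k 1)\ominus 1\ominus(\oplus^\ell 1)$ would make $\{a^n b a^n\}$ regular. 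Gadgets sitting in distant cells of the path are separated in at least one (typically both) of the orders, so an edge gadget cannot be matched to its two endpoint gadgets by length comparison; some geometrically interleaved, locally propagated identification is needed. Second, your robustness device --- markers that are ``short non-monotone patterns that cannot occur inside an increasing or decreasing sequence'' --- is incompatible with the requirement $\pi_G\in\Grid(\cM)$: every cell of an $\cM$-gridded permutation is monotone, so such a marker cannot be placed inside any cell, and if it spans several cells its shape depends on the row/column adjacencies and orientations chosen adversarially by the unknown LPP algorithm, which is precisely what $\psi$ is not allowed to know.

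The paper resolves both issues differently: vertices are represented not by length codes but by \emph{tracks} of two-point ``atomic pairs'' threaded through every block of the path, identity being propagated by the local ``in the range of'' relation between consecutive pairs; a block per row of the adjacency matrix encodes incidences by the number of extra points (one or two) inserted between the $X_j$- and $Y_j$-pairs, so only bounded local counting is ever needed; definability of tracks in MSO is secured by adding \emph{barricades} to each block (so that spurious pairs in distant blocks are excluded) together with a minimality condition on the set variable, and the whole decoding is anchored at the two leftmost points, which is why the path is arranged to end at the unique non-empty cell of the leftmost column. You correctly identified that navigating the adversarial layout and ensuring unambiguous decoding is the crux, but the specific tools you chose for it (unary length comparison, non-monotone markers inside monotone cells) fail, so the proposal has a genuine gap at exactly the step that carries the proof.
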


We need to formalize the logic of undirected graphs before stating the result.
Note that unlike in the proof of Theorem~\ref{thm:mso-checking-tw}, here we are interested in encoding graphs as relational structures where the domain consist only of the vertices.
To that end, we define the signature $\cS_G$ which consists of a single binary relation symbol $E$ describing the edges.
Our only requirement is that $E$ is a symmetric relation, which can be easily described by an FO sentence.
Therefore, we get a theory called \emph{Theory of Graphs ($\TOG$)}.
The MSO logic of this theory is typically denoted MSO\textsubscript{1} or MS\textsubscript{1} in the literature (see~\cite{Courcelle2012}).
Theorem~\ref{thm:mso-checking-np-hard} then follows as a consequence of the following proposition.

\begin{proposition}
  \label{pro:mso-checking-hard}
  Let $\cC$ be a permutation class with the poly-time computable long path property.
  There is a polynomial time algorithm that given a graph $G$ on $n$ vertices and an MSO sentence $\varphi$ in \TOG, computes a permutation $\pi \in \cC$ of length $O(n^2)$ and an MSO sentence $\psi$ in \TOTO{} of length $O(|\varphi|)$ such that $G \models \varphi$ if and only if $\pi \models \psi$.
  Moreover, the sentence $\psi$ depends only on $\varphi$ and in particular, it is independent of the choice of $\cC$.
\end{proposition}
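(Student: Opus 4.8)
The plan is to realise, inside a single permutation of $\cC$, an MSO interpretation of an \emph{arbitrary} graph, using the poly-time long path property to produce a rigid ``host'' in which to embed the adjacency information of $G$. So the first step is to call the poly-time computable LPP with parameter $k=\Theta(n^2)$: this yields, in time polynomial in $n$, a monotone gridding matrix $\cM$ with $\Grid(\cM)\subseteq\cC$ whose cell graph is a path $c_1-c_2-\dots-c_k$ in which no three consecutive cells share a row or a column. Thus the path is a genuine zigzag staircase and every ``joint'' $c_{i-1},c_i,c_{i+1}$ is a turn. Every permutation we build will lie in $\Grid(\cM)$ and hence in $\cC$; but, crucially, the sentence $\psi$ we output will never mention $\cM$ — it will speak only about the two intrinsic orders of the permutation, which is what makes it independent of $\cC$.

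Next I would describe the encoding permutation $\pi=\pi_G$ as a ``tape'' laid out along the staircase. Each cell $c_i$ carries a bounded number of points (placed in the monotone position that $\Grid(\cM)$ dictates for that cell — putting $O(1)$ points in a cell never violates $\Av(21)$ or $\Av(12)$), so $|\pi_G|=O(n^2)$. A single monotone cell carries no information by itself, so each stored bit is encoded by a \emph{free relative position of a cell's points with respect to their partners in the shared row or column}; for instance, when $c_i$ and $c_{i+1}$ sit in the same column their $x$-coordinates may be chosen so that $c_i$'s point lies either left of or right of $c_{i+1}$'s point, a binary choice detected by a single $<_1$-comparison and harmless to the $\Grid(\cM)$-membership. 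Along the $\Theta(n^2)$ positions I would write: $n$ distinguished \emph{vertex points} $v_1,\dots,v_n$, separated by \emph{block delimiters}, interspersed with the $\binom n2$ adjacency bits and with \emph{alignment markers} so that the pair indexing $\{v_i,v_j\}$ of each bit can be recovered; since every joint is a true turn, the staircase structure is recognisable from the local order-types (equivalently, from the incidence graph $G_\pi$), and a constant number of marker gadgets makes the tape self-delimiting. Because the only $\cM$-dependent feature of a cell — whether it is increasing or decreasing — is itself locally detectable, a single fixed MSO formula (reasoning in an automaton-style fashion along the tape, using MSO reachability in $G_\pi$) decodes the tape contents uniformly over all admissible $\cM$.

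From the decoded tape I would read off a standard MSO interpretation of $G$ in $\pi$: a formula $\delta(x)$ whose solutions are exactly the $n$ vertex points, and a formula $\varepsilon(x,y)$ which, given two vertex points, walks the tape to the adjacency bit indexed by the corresponding pair and returns its value. As each vertex of $G$ is represented by exactly one point of $\pi_G$, this is an interpretation without universe-copying, so it relativises cleanly to element \emph{and} set quantifiers; letting $\psi$ be $\varphi$ with every quantifier relativised to $\delta$ and every atom $E(x,y)$ replaced by $\varepsilon(x,y)$ gives an MSO sentence in \TOTO{} with $|\psi|=O(|\varphi|)$, depending only on $\varphi$, and satisfying $\pi_G\models\psi \iff G\models\varphi$. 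Building $\pi_G$ and rewriting $\varphi$ are polynomial, $\pi_G\in\cC$ has length $O(n^2)$, which is exactly the statement; Theorem~\ref{thm:mso-checking-np-hard} then follows by instantiating $\varphi$ with a fixed MSO sentence in \TOG{} whose model checking is NP-hard, e.g.\ a $3$-colourability sentence.

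The step I expect to be the main obstacle is the joint design of the encoding gadget and its decoder. The host $\Grid(\cM)$ is inflexible — every cell must be monotone, and consecutive cells interleave in precisely the way forced by whether they share a row or a column — yet we must extract one free bit from every $O(1)$ points \emph{and} guarantee that one fixed formula decodes the tape over \emph{every} matrix the poly-time LPP might output, in particular handling increasing and decreasing cells and both turn orientations at each joint uniformly. Producing a gadget that is simultaneously realisable inside the monotone staircase, information-bearing, and uniformly MSO-decodable is the technical heart of the proof; the interpretation bookkeeping and the NP-hardness of the fixed graph property are routine.
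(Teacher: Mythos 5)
Your high-level frame matches the paper's (use the poly-time LPP to get a monotone grid path, lay the adjacency matrix of $G$ along it, then MSO-interpret $G$ in the resulting permutation and relativise $\varphi$), but the proposal has a genuine gap exactly at the point you flag as the ``main obstacle'': the decoder. In your layout each cell carries $O(1)$ points, vertices are $n$ single points on the tape, and the $\binom n2$ adjacency bits sit elsewhere, indexed only by their \emph{position} along the tape relative to delimiters and alignment markers. A fixed MSO sentence must then, given two vertex points, locate ``the bit whose index pair is $\{v_i,v_j\}$'', i.e.\ match the $j$-th item of one far-away segment with the $j$-th item of another. This is unbounded index-matching of the $a^n b\, a^n$ kind, which MSO cannot do over what is essentially a linearly ordered tape; and the two-dimensional structure cannot rescue you here, because with only $O(1)$ points per cell a cell boundary can geometrically transmit only $O(1)$ ``threads'' of identity, so there is no way to propagate all $n$ vertex identities from the vertex points to the distant bits by locally checkable conditions. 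Guessing auxiliary sets does not help unless the point configuration makes membership in the right set locally verifiable, which your encoding does not provide. So the step ``walks the tape to the adjacency bit indexed by the corresponding pair'' is not just technically demanding, it fails for the specific encoding you chose.

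The paper resolves exactly this by inverting your trade-off: the path has only $n+3$ cells but each block carries $\Theta(n)$ points, namely an atomic pair $X_j,Y_j$ for \emph{every} vertex $j$ (plus three reference pairs and ``barricades''). Nesting each pair inside the range of its predecessor makes the resulting \emph{tracks} MSO-definable (as inclusion-minimal sets closed under two local rules, with the barricades blocking spurious closures), so a vertex of $G$ is represented not by a single point but by a pair of tracks threading through every block; quantification over vertices becomes quantification over such set pairs, and set quantifiers are relativised to unions of tracks. Then block $B_{i+3}$ encodes row $i$ of the adjacency matrix, and the bit for $\{i,j\}$ is a point inserted locally between the $X_j$- and $Y_j$-pairs inside that block, so the edge predicate is checked by a constant-size local condition between the two tracks — no positional counting is ever needed. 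Your ``interpretation without universe-copying'' with single-point vertices is precisely what has to be abandoned; the index-matching must be carried geometrically by $\Theta(n)$ points per block, not arithmetically by tape positions. The surrounding bookkeeping in your write-up (polynomial construction, $O(n^2)$ length, $|\psi|=O(|\varphi|)$, independence from $\cC$, and deducing the NP-hardness theorem from 3-colourability) is fine.
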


\begin{proof}[Proof of Theorem~\ref{thm:mso-checking-np-hard}]
The famous NP-hard problem \textsc{3-colorability} is easily expressed by an MSO sentence $\varphi$ in \TOG~\cite{Courcelle2012}.
Let $\cC$ be an arbitrary permutation class with the poly-time computable long path property and let $\psi$ be the MSO sentence in \TOTO given by Proposition~\ref{pro:mso-checking-hard}.
Note that $\psi$ depends only on $\varphi$.
Proposition~\ref{pro:mso-checking-hard} then itself serves as a polynomial time reduction between \textsc{3-colorability} and deciding whether $\pi \models \psi$ for a given permutation~$\pi \in \cC$.
\end{proof}

\begin{proof}[Proof of Proposition~\ref{pro:mso-checking-hard}]
We assume that the vertex set of $G$ is precisely the set $[n]$.
The basic idea is that we can represent the adjacency matrix of $G$ by a permutation $\pi \in \cC$. 
We split the adjacency matrix into individual rows and we represent each row using a single cell along a path in the cell graph.

\paragraph*{Construction of $\pi$.}
We first describe the construction of the permutation $\pi$ as it is more straightforward.
We obtain in polynomial time a monotone gridding matrix $\cM$ such that
\begin{enumerate}[label=(\roman*)]
	\item $\Grid(\cM)$ is a subclass of $\cC$,\label{cond:mso-check1}
	\item the cell graph $G_\cM$ is a path on $n+3$ vertices such that no three consecutive cells occupy the same row or column and moreover,\label{cond:mso-check2}
	\item one endpoint of this path is the single non-empty cell in the leftmost column of $\cM$.\label{cond:mso-check3}
\end{enumerate}
The properties~\ref{cond:mso-check1} and~\ref{cond:mso-check2} are directly implied by the poly-time computable long path property.
In order to guarantee~\ref{cond:mso-check3}, we can generate a monotone gridding matrix $\cM'$ that satisfies~\ref{cond:mso-check1} and whose cell graph is a path on $2n + 6$ vertices.
We can then split the path in $G_{\cM'}$ by removing either of the (at most two) non-empty cells in the leftmost column.
One choice leads to a path of length at least $n+3$ with an endpoint in its leftmost column.

\begin{figure}[t!]
	\centering
	\includegraphics[scale=0.8]{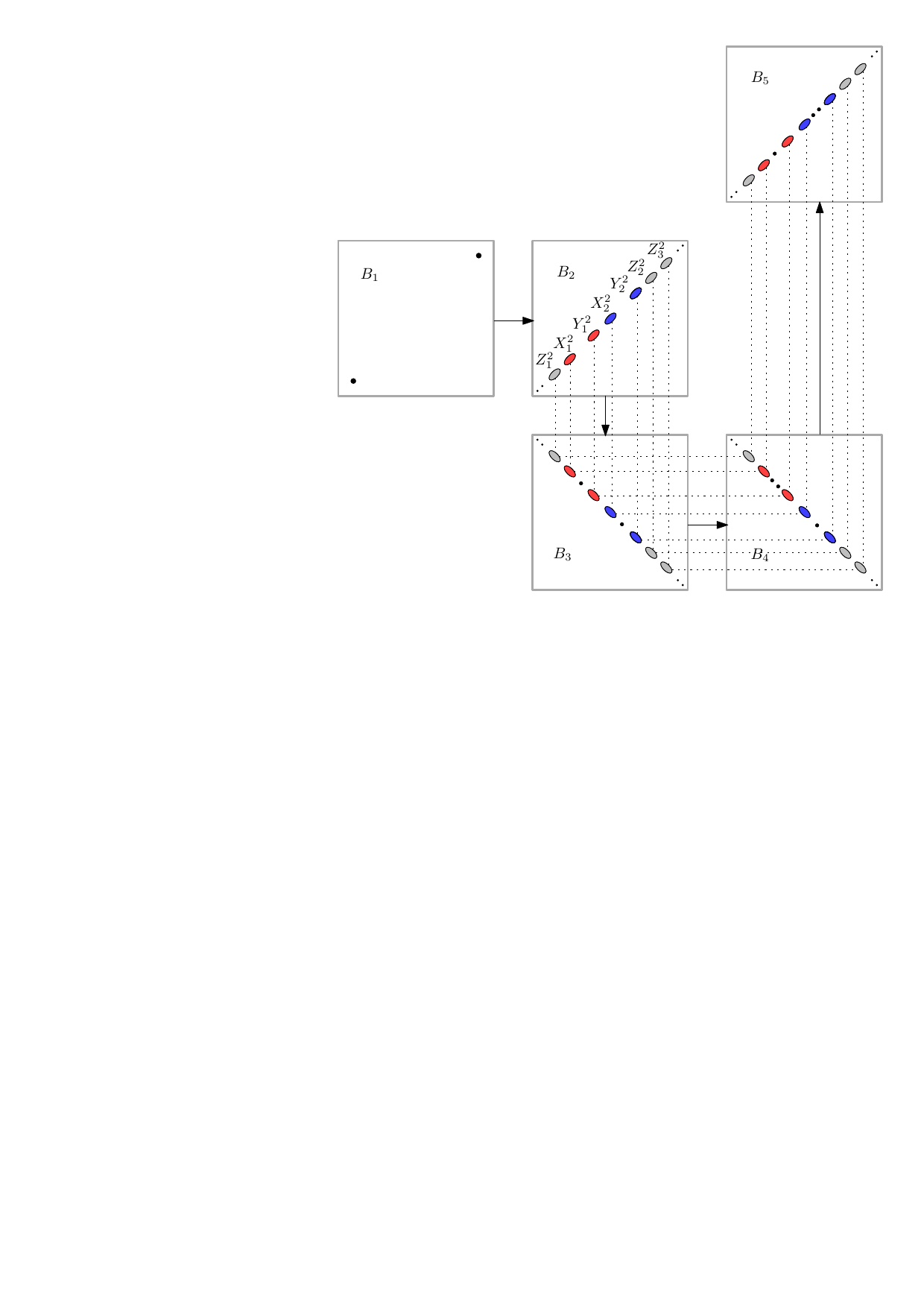}
	\caption{Encoding the complete graph on 2~vertices into a path consisting of 5~monotone blocks in the proof of Proposition~\ref{pro:mso-checking-hard}. Atomic pairs are displayed as small ellipses that are connected into tracks via dotted lines.
		For a fixed $i$, the $X_i$-track and the $Y_i$-track are colored with the same color (red and blue, respectively), while the $Z_1$-track, $Z_2$-track and $Z_3$-track are gray. }
	\label{fig:mso-hard}
\end{figure}

We orient the path in the cell graph outwards from the leftmost cell in $\cM$ and denote the vertices in the order on the path as $v_1, \dots, v_{n+3}$.
We construct $\pi$ as a sequence of $n+3$ monotone blocks $B_1, \dots, B_{n+3}$ such that $B_i$ is increasing if and only if $\cM_{v_i} = \Av(21)$ and for different $i, j \in [n+3]$, the relative position of $B_i$ with respect to $B_j$ is the same as the relative position of $v_i$ with respect to $v_j$.
It then trivially follows that $\pi \in \Grid(\cM) \subseteq \cC$.

We will construct majority of each block $B_i$ from pairs of points, called \emph{atomic pairs}, that we always treat as a single unit.
For an atomic pair $X$, we say that a point $p$ is \emph{in the range of $X$} if $p$ lies either in the horizontal or in the vertical strip bounded by $X$.
Note that this is similar to the arrows and their ranges defined in the proof of Theorem~\ref{thm:fo-merges}.

We postpone the definition of the block $B_1$ for now.
Fix $i \ge 2$.
The block $B_i$ contains 2 points below and to the left of everything else in $B_i$ and 2 points above and to the right of everything else in $B_i$, called \emph{barricades}.
The relative position between barricades in neighboring blocks is not important.
Furthermore, $B_i$ contains three special atomic pairs $Z^i_1, Z^i_2$ and $Z^i_3$ and two atomic pairs $X^i_j$ and $Y^i_j$ associated to each vertex $j$ of $G$.
The first block $B_1$ consists of a single atomic pair $S$, called the \emph{anchor}, such that all the atomic pairs in $B_2$ lies in the range of $S$ but the barricades lie outside.

In the block $B_2$, the pairs are ordered from bottom to top as \[Z^2_1, X^2_1, Y^2_1, X^2_2, Y^2_2, \dots, X^2_n, Y^2_n, Z^2_2, Z^2_3.\]
In other words, $Z^2_1$ is the bottommost pair, followed by the pairs $X^2_j, Y^2_j$ ordered by $j$ and at the very top we have pairs $Z^2_2$ and $Z^2_3$.
In the remaining blocks, we position the pairs such that the pair $\cX^i_j$ for $\cX \in \{X,Y,Z\}$ and $i \ge 3$ lies in the range of $\cX^{i-1}_j$. 

Observe that  for each possible choice of $\cX$ and $j$, the pairs $\cX^2_j, \dots, \cX^{n+3}_j$ form a sequence that contains one pair in each block $B_1, \dots, B_{n+3}$ and moreover, the pair in the block $B_i$ for $i \ge3$ lies in the range of the pair in the block $B_{i-1}$.
We again call these sequences \emph{tracks} and in particular, we call the track formed by the pairs $\cX^2_j, \dots, \cX^{n+3}_j, $ the \emph{$\cX_j$-track}.
So, for instance, we have the $Z_1$-track, $X_2$-track, $Y_3$-track etc.

To finish the construction of $\pi$, we add some additional individual points to $\pi$.
We add a single point to the block $B_3$ between the pairs $X^3_j$ and $Y^3_j$ for each $j \in [n]$.
These are used to identify the pairs of tracks associated to a single vertex. 
Each remaining block represents one row of the adjacency matrix of $G$.
For $i \in [n]$, the block $B_{i+3}$ contains additionally a pair of points between the pairs $X^{i+3}_i$ and $Y^{i+3}_i$ and moreover, a single point between $X^{i+3}_j$ and $Y^{i+3}_j$ for any $j \in [n]$ such that $\{i,j\}$ is an edge in $G$.
See Figure~\ref{fig:mso-hard}.

\paragraph*{Construction of $\psi$.}
Now we describe how to translate the MSO sentence $\varphi$ in \TOG{} to the MSO sentence $\psi$ in \TOTO.
Note that we shall define $\psi$ using an expanded language that is however easily translated to MSO sentences.
For two points $p,q$ in $\pi$ and $\alpha \in \{1, 2\}$, we denote by $(p,q)_{\prec_\alpha}$ all the points in $\pi$ that lie in the interval $(p, q)$ with respect to the order $\prec_\alpha$.
Similarly, we denote by $[p,q]_{\prec_\alpha}$ all the points in $\pi$ in the interval $[p, q]$ with respect to the order $\prec_\alpha$.
It is easy to see that predicates like `$|(p,q)_{\prec_1}| = 2$' or `$S \subseteq (p,q)_{\prec_1}$'  are easily expressed via MSO sentences.

First let us describe how we can test whether a set variable $T$ in $\psi$ is equal to a single track.
If we are given an atomic pair $D = (p_1,p_2)$ in the block $B_2$ then we claim that its corresponding track (the $D$-track) can be inductively generated by the following rules
\begin{enumerate}[label=(\roman*)]
	\item $p_1,p_2$ belong to $T$,\label{cond:track1}
	\item for every $q_1,q_2 \in T$ such that $(q_1,q_2)_{\prec_1} = \emptyset$ and  $(q_1,q_2)_{\prec_2} = \{r_1, r_2\}$ for pairwise different $r_1, r_2$, the points $r_1, r_2$ also belong to $T$, and\label{cond:track2}
	\item for every $q_1,q_2 \in T$ such that $(q_1,q_2)_{\prec_2} = \emptyset$ and  $(q_1,q_2)_{\prec_1} = \{r_1, r_2\}$ for pairwise different $r_1, r_2$, the points $r_1, r_2$ also belong to $T$.\label{cond:track3}
\end{enumerate}

\begin{claim}
	\label{claim:track}
	Let $D = \{p_1, p_2\}$ be an atomic pair in the block $B_2$.
	A subset $T$ of $\pi$ is a $D$-track if and only if \ref{cond:track1}, \ref{cond:track2}, \ref{cond:track3} hold for $T$ and moreover, $T$ is minimal such set with respect to inclusion.
\end{claim}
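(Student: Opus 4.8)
The plan is to prove the two directions separately. First I would show that the genuine $D$-track $T^\star = \{\cX^2_j, \dots, \cX^{n+3}_j\}$ (where $D = \cX^2_j$) satisfies conditions \ref{cond:track1}, \ref{cond:track2}, \ref{cond:track3}. Condition \ref{cond:track1} is immediate. For \ref{cond:track2} and \ref{cond:track3}, the key geometric observation is how consecutive pairs on a track sit relative to each other: the pair $\cX^{i}_j$ lies in the range of $\cX^{i-1}_j$, meaning its two points fall into the horizontal or vertical strip bounded by the previous pair. Since the blocks alternate between being horizontally and vertically arranged along the path, and since each block is \emph{monotone} with the pairs separated by barricades, the only two points of $\pi$ lying strictly between the two points of a pair $\cX^{i-1}_j$ in the relevant order (with nothing between them in the perpendicular order) are precisely the two points of the next pair $\cX^{i}_j$ — the barricades of block $B_i$ and all other pairs $\cX^i_{j'}$ for $j' \neq j$ lie outside this strip because they are in the ranges of \emph{other} pairs of $B_{i-1}$. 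Here I must carefully use the fact that ``no three consecutive cells occupy the same row or column'' so that the alternation between rules \ref{cond:track2} and \ref{cond:track3} is forced, and that the single extra points added to blocks $B_3, \dots, B_{n+3}$ sit \emph{between} an $X$-pair and the corresponding $Y$-pair, hence outside the strip bounded by a single pair. This shows $T^\star$ is closed under the rules; minimality with respect to inclusion then follows because starting from $\{p_1,p_2\}$ every element of $T^\star$ is eventually forced, so no proper subset of $T^\star$ can satisfy \ref{cond:track1}--\ref{cond:track3}.

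For the converse, suppose $T$ satisfies \ref{cond:track1}, \ref{cond:track2}, \ref{cond:track3} and is inclusion-minimal. Let $U$ be the closure of $\{p_1,p_2\}$ under the three rules; by the argument above $U = T^\star$, and since $T$ satisfies the rules we have $U \subseteq T$, so $T^\star \subseteq T$. But $T^\star$ itself satisfies \ref{cond:track1}--\ref{cond:track3} (first part), so by minimality of $T$ we get $T = T^\star$, as desired. The only subtlety is to confirm that the closure operator is well-defined and that ``closure of $\{p_1,p_2\}$'' coincides with ``the smallest set containing $\{p_1,p_2\}$ and closed under the rules'' — this is routine since the rules are monotone.

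I expect the main obstacle to be the case analysis verifying that, for consecutive pairs $\cX^{i-1}_j$ and $\cX^{i}_j$, the set of points of $\pi$ strictly between the two points of $\cX^{i-1}_j$ in the appropriate order is \emph{exactly} $\{$the two points of $\cX^{i}_j\}$ (with the perpendicular interval empty), and nothing else. This requires checking all the structural ingredients of the construction together: the barricades, the ordering of pairs within $B_2$, the ``in the range of'' nesting, the alternation of block orientations along the path, the no-three-consecutive-collinear condition, and the placement of the individual adjacency-encoding points strictly between $X$- and $Y$-pairs. One must also separately handle the boundary behaviour at block $B_1$ (the anchor $S$) and at $B_2$ (where the rule is first applied), but these follow the same geometric principle.
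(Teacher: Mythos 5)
Your overall route coincides with the paper's: identify the genuine track $T^\star$, show it satisfies \ref{cond:track1}--\ref{cond:track3}, show that any set containing $\{p_1,p_2\}$ and closed under the rules must contain all of $T^\star$ (the forcing argument), and deduce both directions from inclusion-minimality. The converse direction and the minimality argument are sound and essentially identical to the paper's.

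There is, however, a genuine gap in your verification that $T^\star$ satisfies \ref{cond:track2} and \ref{cond:track3}. These conditions quantify over \emph{all} pairs $q_1,q_2\in T$, not only over the two points of one atomic pair. You check only the case where $\{q_1,q_2\}$ is the atomic pair of the track in some block $B_{i-1}$, argue that the points it forces are exactly the next pair $\cX^i_j$, and then conclude that ``$T^\star$ is closed under the rules.'' What is never addressed is the case of two track points $q_1,q_2$ lying in \emph{different} blocks: for such pairs one must show that the hypotheses of \ref{cond:track2} and \ref{cond:track3} can never be met, since otherwise a rule could force points that do not belong to $T^\star$, and the left-to-right implication of the claim would collapse. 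This is precisely the step the barricades were introduced for: for $q_1,q_2$ in different blocks, at least one of $(q_1,q_2)_{\prec_1}$, $(q_1,q_2)_{\prec_2}$ contains the four barricade points of an intervening block, so neither rule applies; the paper even remarks that this is the sole reason barricades appear in the construction. In your proposal the barricades occur only in the unrelated role of lying outside the strip of a single atomic pair, so the cross-block case --- and with it the closedness of $T^\star$, on which the rest of your argument rests --- remains unproved. The delicate instances are points in consecutive blocks (say a point of the pair of $B_i$ nearest to $B_{i+1}$ together with a point of $B_{i+1}$ lying in its range), where without the barricades both intervals could be small enough to trigger a rule.
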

\begin{proofclaim}
Suppose that $T$ is a $D$-track.
The condition~\ref{cond:track1} holds vacuously and thus, the only way $T$ could violate these conditions is if there was a pair of points $q_1, q_2 \in T$ from different blocks for which the condition~\ref{cond:track2} or~\ref{cond:track3} fails.
In that case, both sets $(q_1, q_2)_{\prec_1}$ and $(q_1,q_2)_{\prec_2}$ would contain at most 2 points.
That is, however, not possible for $q_1$ and $q_2$ from different blocks since at least one of these sets must contain four points of the barricades.
Note that this is actually the sole reason why we added barricades during our construction.
For any proper subset $T'$ of $T$, let $i \ge 2$ be the index such that the block $B_i$ is the first where $T'$ and $T$ differ.
If $i = 2$ then $T'$ violates~\ref{cond:track1}, otherwise $T'$ violates~\ref{cond:track2} or~\ref{cond:track3} for the atomic pair in the block $B_{i-1}$.

For the other direction, assume that $T$ is an inclusion-wise minimal set satisfying the conditions \ref{cond:track1}, \ref{cond:track2} and \ref{cond:track3}.
The atomic pair $D$ in the block $B_2$ belongs to $T$ by condition~\ref{cond:track1}.
It then suffices to alternate using conditions~\ref{cond:track2} and~\ref{cond:track3} to show that $T$ must contain the whole $D$-track.
Since the $D$-track itself satisfies \ref{cond:track1}, \ref{cond:track2} and \ref{cond:track3}, the set $T$ cannot contain any other points as it would not be inclusion-wise minimal.
\end{proofclaim}

The conditions~\ref{cond:track1},~\ref{cond:track2} and~\ref{cond:track3} can easily be encoded as an MSO predicate $suptrack(p_1,p_2,T)$.
The power of MSO allows us to further enforce that $T$ is a minimal set satisfying these conditions (and thus, equal to the desired track by Claim~\ref{claim:track}) by an MSO predicate $track(p_1,p_2,T)$ defined as
\[ track(p_1, p_2, T) = suptrack(p_1, p_2, T) \land \forall S \left[(S \subsetneq T) \rightarrow \neg\, suptrack(p_1, p_2, S)\right].\]

At the very beginning of $\psi$, we can fix two variables $a_1$ and $a_2$ to the anchors of $\pi$ since they are the two leftmost points of $\pi$ as follows
\[\exists a_1 \exists a_2 \left[a_1 <_2 a_2 \land \forall x \;(x = a_1 \lor x = a_2 \lor ( a_1 <_1 x \land a_2 <_1 x) ) \right]. \]

We additionally introduce three set variables $T_{Z_1}$, $T_{Z_2}$ and  $T_{Z_3}$, set to the $Z_1$-track, $Z_2$-track and $Z_3$-track, respectively.
For $\alpha \in [2]$, let $p \sqsubset_\alpha q$ be the FO predicate that evaluates true if and only if $p <_\alpha q$ and there is no point $r$ such that $p <_\alpha r <_\alpha q$.
It is sufficient to find the atomic pairs $Z_1$, $Z_2$ and $Z_3$ inside $B_2$ and then apply the predicate $track$.
We can find the atomic pairs using the predicate $\sqsubset_\alpha$ since $Z_1$ contains the bottommost two points in the set $[a_1,a_2]_{\prec_2}$ while $Z_2$ and $Z_3$ consist of the topmost four points in $[a_1,a_2]_{\prec_2}$.
Formally, we write
\begin{multline*}
	\exists\, T_{Z_1}, T_{Z_2}, T_{Z_3} \; [ \exists p_1, q_1, p_2, q_2, p_3, q_3 \;(a_1 \sqsubset_2 p_1 \sqsubset_2 q_1\; \land\; p_2 \sqsubset_2 q_2 \sqsubset_2 p_3 \sqsubset_2 q_3  \sqsubset_2 a_2\\
	\land track(p_1, q_1, T_{Z_1}) \land track(p_2, q_2, T_{Z_2}) \land track(p_3, q_3, T_{Z_3}))].
\end{multline*}

Then we replace every vertex variable $x$ in $\varphi$ with two set variables $T_{X_x}$ and $T_{Y_x}$ intended to describe the $X_x$-track and $Y_x$-track.
With that in mind, we define predicate $vertex(T_{X_x}, T_{Y_x})$ that tests this requirement.
It is sufficient to find two consecutive atomic pairs $\{p_1, q_1\}$ and $\{p_2, q_2\}$ in the range of $\{a_1,a_2\}$ such that moreover, there are two points in the sets $(p_1, q_1)_{\prec_1}$, $(p_2, q_2)_{\prec_1}$ (the atomic pairs $X_x$ and $Y_x$ inside $B_3$) and only one in the set $(q_1,p_2)_{\prec_1}$ (the additional point separating $X_x$ and $Y_x$ inside $B_3$).
Formally, we have
\begin{multline*}
	vertex(T_{X_x}, T_{Y_x}) = \exists\, p_1, q_1, p_2, q_2 \\
	\left(
	\begin{aligned}
	& p_1, q_1, p_2, q_2 \in (a_1, a_2)_{\prec_2} \,\land\,p_1 \sqsubset_2 q_1 \sqsubset_2 p_2 \sqsubset_2 q_2 \\
	\land\,& |(p_1, q_1)_{\prec_1}| = 2
	\,\land\, |(p_2, q_2)_{\prec_1}| = 2 \,\land\, |(q_1, p_2)_{\prec_1}| = 1 \\
	\,\land\,& track(p_1, q_1, T_{X_x}) \,\land\, track(p_2, q_2, T_{Y_x}).
	\end{aligned}
	\right)
\end{multline*}

We replace every occurrence of $\exists x\; \rho$ in $\varphi$ with $\exists T_{X_x}, T_{Y_x} (vertex(T_{X_x}, T_{Y_x}) \land \rho)$, and similarly every $\forall x\; \rho$ is replaced with $\forall T_{X_x}, T_{Y_x} (vertex(T_{X_x}, T_{Y_x}) \rightarrow \rho)$.
Furthermore, any occurrence of $x \in X$ is replaced simply with $T_{X_x} \subseteq X\, \land\, T_{Y_x} \subseteq X$ and we change quantifications over sets to capture only those that are formed as union of tracks $T_{X_x}, T_{Y_x}$ for some set of vertices.
Formally, we replace $\exists X\; \rho$ and $\forall X\; \rho$, respectively, with
\begin{gather*}
	\exists X \; \big[\forall x \in X\; \exists T_{A}, T_{B}\; (vertex(T_{A}, T_{B}) \,\land\, T_{A} \cup T_{B} \subseteq X \,\land\, x \in T_{A} \cup T_{B})\big] \,\land\, \rho,\\
	\forall X \; \big[\forall x \in X\; \exists T_{A}, T_{B}\; (vertex(T_{A}, T_{B}) \,\land\, T_{A} \cup T_{B} \subseteq X \,\land\, x \in T_{A} \cup T_{B})\big] \rightarrow \rho.
\end{gather*}

\smallskip
Finally, we need to replace every predicate $E(x,y)$ inside $\varphi$.
In order to do that, we first show that it is possible to define a predicate identifying a single block inside $\pi$.
Observe that depending on the shape of the path, each block might end up transformed in one of four possible ways.
These symmetries are generated by reversal and complement.
We define the \emph{row orientation} of a block $B_i$ to be 1 if the bottommost atomic pair is $Z^i_1$, and -1 otherwise.
Similarly, we define the \emph{column orientation} of a block $B_i$ to be 1 if the leftmost atomic pair is $Z^i_1$, and -1 otherwise.

We define a predicate $block^{(r,c)}(S)$ for every $r,c \in \{-1,1\}$ that evaluates true if and only if $S$ is the point set of a block $B_i$ with row orientation $r$ and column orientation $c$.
Instead of writing the full technical definition, we list individual properties that are sufficient and each of them is easily seen to be expressible by an MSO sentence.
Moreover, we only list the properties defining $block^{(1,1)}(S)$ as the other three possibilities are symmetric.
The following must hold for a set $S$ satisfying $block^{(1,1)}(S)$:
\begin{enumerate}[label=(\roman*)]
	\item $S$ is equal to $[p,q]_{\prec_1} \cap [r,s]_{\prec_2}$ for some points $p, q, r$ and $s$,
	\item $S$ is an increasing point set,
	\item $|S \cap T_{Z_1}| = |S\cap T_{Z_2}| = |S \cap T_{Z_3}| = 2$, and
	\item the bottommost 2 points of $S$ belong to $T_{Z_1}$ while the topmost 2 points belong to $T_{Z_3}$.
\end{enumerate}

When we have a set variable $S$ representing a single block, we can test whether there is an edge between vertices $x$ and $y$ (represented by variables $T_{X_x}$, $T_{Y_x}$, $T_{X_y}$ and $T_{Y_y}$) represented inside the block $S$ by verifying that
\begin{enumerate}[label=(\roman*)]
	\item there are exactly two points lying between $T_{X_x} \cap S$ and $T_{Y_x} \cap S$ both horizontally and vertically, and
	\item there is exactly one point lying between $T_{X_y} \cap S$ and $T_{Y_y} \cap S$ both horizontally and vertically.
\end{enumerate}
We can clearly encode this as an MSO predicate $block\_edge(T_{X_x}, T_{Y_x}, T_{X_y}, T_{Y_y}, S)$.
Finally, we can replace every predicate $E(x,y)$ inside $\varphi$ with the predicate $edge(T_{X_x},\allowbreak T_{Y_x},\allowbreak T_{X_y},\allowbreak T_{Y_y})$ defined as
\begin{multline*}
	edge(T_{X_x}, T_{Y_x}, T_{X_y}, T_{Y_y}) = \exists S \;\bigvee_{\mathclap{r,c \in \{-1,1\}}}  block^{(r,c)}(S) \land\; block\_edge(T_{X_x}, T_{Y_x}, T_{X_y}, T_{Y_y}, S).
\end{multline*}

To wrap up, observe that $\pi$ is a permutation of length $O(n^2)$ and the length of $\psi$ is $O(|\varphi|)$ as promised.
Moreover, it is clear from the construction of $\pi$ and $\psi$ that $G \models \varphi$ if and only if $\pi \models \psi$.
\end{proof}

Let us remark that we immediately obtain the dichotomy for hardness of MSO model checking for classes defined by avoiding a single fixed pattern.
Recall that Jelínek et al.~\cite{Jelinek2021b} showed that $\Av(\sigma)$ has the poly-time computable LPP for any $\sigma$ not symmetric to any of $1$, $12$ or $132$ and thus, MSO model checking is hard by Theorem~\ref{thm:mso-checking-np-hard}.
On the other hand, the classes $\Av(1)$ and $\Av(12)$ are trivial, and the bounded tree-width of $\Av(132)$ follows from the results of Ahal and Rabinovich~\cite{Ahal2008}.
As a result, MSO model checking inside a permutation class $\Av(\sigma)$ is decidable by an FPT algorithm parameterized by the length of the formula if $\sigma$ is symmetric to $1, 12$ or $132$; and
otherwise, MSO model checking is as hard as in general graphs.



\bibliography{bibliography}

\end{document}